\def\al{\alpha} 
\def\ga{\gamma} 
\def\de{\delta} 
\def\ep{\varepsilon} 
\def\ze{\zeta}
\def\si{\sigma} 
\def\ph{\varphi} 
\def\ps{\psi}
\def\De{\Delta}
\def\Om{\Omega}
\def\x{\times}
\def\p{\partial}
\def\H{\mathcal{H}}
\def\R{{\mathbb R}}
\let\on=\operatorname
\let\wt=\widetilde
\let\mc=\mathcal
\let\mf=\mathfrak
\newcommand{\ud}{\,\mathrm{d}}
\newcommand{\executeiffilenewer}[3]{%
\ifnum\pdfstrcmp{\pdffilemoddate{#1}}%
{\pdffilemoddate{#2}}>0%
{\immediate\write18{#3}}\fi%
}
\newcommand{%
\executeiffilenewer{.svg}{.pdf}%
{inkscape -z -D --file=.svg %
--export-pdf=.pdf --export-latex}%
\import{images/}{.pdf_tex}%
}[1]{%
\executeiffilenewer{#1.svg}{#1.pdf}%
{inkscape -z -D --file=#1.svg %
--export-pdf=#1.pdf --export-latex}%
\import{images/}{#1.pdf_tex}%
}
\begin{document}

\title*{Geometry of Image Registration: The Diffeomorphism Group and Momentum Maps}
\titlerunning{Geometry of Image Registration}
\author{Martins Bruveris and Darryl D. Holm}
\institute{Martins Bruveris \at Institut de Math\'ematiques, EPFL, Lausanne 1015, Switzerland, \email{martins.bruveris@epfl.ch} 
\and Darryl D. Holm \at Department of Mathematics, Imperial College London, London SW7 2AZ, United Kingdom, \email{d.holm@imperial.ac.uk}}
%
%
\maketitle

\abstract{These lecture notes explain the geometry and discuss some of the analytical questions underlying image registration within the framework of \emph{large deformation diffeomorphic metric mapping} (LDDMM) used in computational anatomy.\footnote{Both authors gratefully acknowledge partial support by Advanced Grant 267382 from the European
Research Council.}}

\section{Introduction}\label{sec_intro}

The goal of computational anatomy is to model and study the variability of anatomical shape. The ideas of computational anatomy originate in the seminal book ``Growth and Form'' by D'Arcy Thompson \cite{Thompson1917}.

\begin{quotation}
In a very large part of morphology, our essential task lies in the comparison of related forms rather than in the precise definition of each; and the \emph{deformation} of a complicated figure may be a phenomenon easy of comprehension, though the figure itself have to be left unanalysed and undefined. This process of comparison [...] finds its solution in the elementary use of a certain method of the mathematician. This method is the Method of Coordinates, on which is based the Theory of Transformations. \cite[p1032]{Thompson1917}
\end{quotation}

More recently Grenander \cite{Grenander1981, Grenander1993, Grenander2007} generalized these ideas to encompass a diverse collection of real-world situations and formulated the principles of pattern theory. The following formulation is adapted from \cite{Mumford2010}:
\begin{enumerate}
\item
A wide variety of signals result from observing the world, all of which show patterns of many kinds. These patterns are caused by laws present in the world, but at least partially hidden from direct observation.
\item
Observations are affected by many variables that are not conveniently modelled deterministically because they are too complex or too dificult to observe.
\item
Patterns can be described as precise \emph{pure patterns} distorted and transfromed by a limited family of \emph{deformations}.
\end{enumerate}

To have a specific example in mind, we will consider computational neu\-ro\-ana\-tomy; i.e., the study of the form and shape of the brain \cite{Grenander1998}. The observations in this case are the diagnostic tools accessible to the clinician; of particular interest to us are noninvasive imaging techniques like computed tomography (CT), magnetic resonance imaging (MRI), functional MRI and diffusion tensor imaging (DTI). The hidden laws behind the observations are all the processes taking place at cellular, organ and environmental level, which together influence and form the anatomical shape of the brain.

To avoid having to model the brain from first principles, we observe instead that topologically all brains are very similar. If we take the MRI scans of two patients --- volumetric grey-scale images of two brains --- then we will be able to deform the contour surfaces of one image to approximately match the other. The study of shape and variability of brains within the framework of pattern theory, thus reduces to estimating the transformations that deform one brain image into an other. Given two images, the problem of finding this transformation is called the problem of \emph{image registration}. One then compares these transformations in order to infer information about shape and variability.

\runinhead{Outline of the notes.} The purpose of these lecture notes is to explain the geometry that underlies image registration within the LDDMM framework and to show how it is used in computational anatomy.
Section \ref{sec_intro} introduces the main objectives of computational anatomy. Section \ref{sec_geometry} explains the Lie group concepts and Riemannian geometry underlying the image registration problem. Finally, Sect. \ref{sec_analysis} sketches some of the analytical problems that arise in image registration within the LDDMM framework. The references are not exhaustive. Throughout, we rely on the fundamental texts \cite{Mumford2010,Younes2010}.

\runinhead{Image Registration with LDDMM.} Mathematically we model a volumetric grey-scale image $I$ as a function $I : \R^3 \to \R$ and we denote by $\mc F(\R^3)$ the collection of all such functions, subject to certain smoothness assumptions. We model transformations $\ph$ as smooth, invertible maps $\ph : \R^3 \to \R^3$ with smooth inverses. Such maps are called \emph{diffeomorphisms}. Invertibility ensures that tissue is not torn apart or collapsed to single points. The set of all transformations is denoted by $\on{Diff}(\R^3)$ and since it is closed under composition and taking the inverse, it forms a group, called the \emph{diffeomorphism group}. Deforming an image $I$ by the transformation $\ph$ corresponds to the change of coordinates $I \circ \ph^{-1}$. In the transformed image $I \circ\ph^{-1}$ the voxel $\ph(x)$ has the same grey-value as the voxel $x$ of the original image.

Given two images $I_0, I_1 \in \mc F(\R^3)$, the first apporach to the image registration problem would be to search for $\ph \in \on{Diff}(\R^3)$, such that $I_0\circ\ph^{-1} =I_1$. Two things can go wrong. First, such a $\ph$ may not exist and second, if it exists, it may not be unique. To address these problems, we can introduce a distance $d_1(\ph, \ps)$ on the set of transformations and a distance $d_2(I, J)$ on the set of images and search for the minimizer of
\begin{equation}
\label{imreg_dist}
\underset{\ph}{\on{argmin}}\; d_1(\on{Id}, \ph)^2 + \tfrac 1{\si^2} d_2(I_0\circ\ph, I_1)^2\;.
\end{equation}
The first term addresses the problem of uniqueness by ensuring that among all the transformations that deform $I_0$ into $I_1$ we pick the simplest one, by which we mean the one closest to the identity transformation. The second term allows us to compare images for which an exact solution to the registration problem does not exist, by requiring that the transformed image is close but not necessarily equal to $I_1$. Taken together \eqref{imreg_dist} represents a balance between finding a simple transformation and one that reproduces the given image. The parameter $\si^2$ controls this balance between simplicity or regularity of the transformation and the registration accuracy.

There are many possible definitions of a distance $d_1(\on{Id}, \ph)$ on the space of smooth invertible maps. We shall concentrate on the definition used in the \emph{large deformation diffeomorphic metric mapping} (LDDMM) approach \cite{Beg2005, Trouve2002, Miller2001, Trouve1998}, which generates the transformation $\ph = \ph_1$ as the flow of a time-dependent vector field. That is, $t \mapsto u_t$ is a solution to the flow equation
\[
\p_t \ph_t(x) = u_t(\ph_t(x)),\qquad \ph_0(x) = x\;.
\]
The distance $d_1(\on{Id}, \ph)$ is measured using a norm on the vector field $u_t$,
\[
d_1(\on{Id}, \ph)^2 = \inf_{\left\{ u_t \,:\, \ph = \ph_1\right\}}  \int_0^1 \left| u_t\right|^2 \ud t\;.
\]
Regarding the distance $d_2(I,J)$ on images, the simplest choice is the $L^2$-norm, i.e. $d_2(I, J)=|I-J|_{L^2(\R^3)}$, which will be used throughout these notes. The problem of image registration via LDDMM will form the basis of the following discussion.

\begin{definition}[Image Registration via LDDMM]
\label{def_lddmm}
Given two images $I_0, I_1 \in V$, find a time-dependent vector field $t \mapsto u_t \in \mf X(\R^3)$ that minimizes the energy
\begin{equation}
\label{lddmm_en}
E(u) = \frac 12 \int_0^1 \left| u_t \right|^2 \ud t + \frac{1}{2\si^2} \left| I_0\circ\ph_1^{-1} - I_1\right|^2_{L^2(\R^3)}\;,
\end{equation}
where $\ph_t \in \on{Diff}(\R^3)$ is the flow of $u_t$, i.e.
\[
\p_t \ph_t(x) = u_t(\ph_t(x)),\qquad \ph_0(x) = x\;,
\]
The vector field $t \mapsto u_t$ and the transformation $\ph_1$ are the solutions of the image registration problem.
\end{definition}

This is not the only possible approach to image registration. In fact, a large literature about image registration exists. An overview of the available methods can be found, e.g. in \cite{Holden2008, Klein2009}. The LDDMM method, whilst being computationally more expensive than others, is among the most accurate \cite{Ashburner2011} registration methods. Here we will concentrate on the geometric structure of the LDDMM solutions. In particular, we will sketch some applications in which the geometry behind LDDMM helps illuminate relationships between anatomical shape and neurological functions.

Data structures other than images can be registered within the LDDMM framework. These include landmarks \cite{Joshi2000, Glaunes2004}, curves \cite{Glaunes2008, Durrleman2009}, surfaces \cite{Valliant2005}, tensor fields \cite{Alexander2001, Cao2005} or functional data on a manifold \cite{Miller2009, Qiu2006}. In fact the abstract formulation of LDDMM in Sect. \ref{sec_geometry} encompasses all these examples. Instead of the $L^2$-norm one can use other similarity metrics to measure the distance between images, e.g. mutual information \cite{Maes1997}. The biggest departure from LDDMM would be to change the way diffeomorphisms are generated. Possible approaches are stationary vector fields \cite{Ashburner2007}, free-form deformations \cite{Rueckert1999}, only affine transformations \cite{Jenkinson2001} or demons \cite{Thirion1998, Vercauteren2008}. Common to all these methods however is the loss of geometric structure.

\runinhead{Anatomical Shape and Function.} Alzheimer's disease (AD) is a neurodegenerative disease and is the most frequent type of dementia in the elderly \cite{Ferri2006}. Related to AD is mild cognitive impairment (MCI), an intermediate cognitive state between healthy ageing and dementia. Although most patients who develop AD are first diagnosed with MCI, not all of those with MCI will develop AD. There is considerable variability among the prognoses of patients with MCI: some develop into AD, while others remain stable, revert back to normal cognitive status or develop other forms of dementia. It is therefore of interest to find methods of predicting the prognosis of patients with MCI. One approach is to look for manifestations of AD and MCI in the anatomical shape and to find connections between anatomical shape and clinical measures of cognitive status that are used to diagnose and distinguish between AD, MCI and normal cognitive state (NCS) \cite{Weiner2012}.

\subruninhead{Alzheimer's Disease and the Shape of Subcortical Structures.}  In \cite{Qiu2009} a population $(I_j)_{1\leq j \leq 383}$ of 383 subjects, both healthy and diseased, was registered to a common template $I_{\mathrm{templ}}$; i.e., for each pair $I_j$, $I_{\mathrm{templ}}$ a deformation $\ph_j$, satisfying $I_{\mathrm{templ}}\circ\ph_j^{-1} \approx I_j$, was computed by solving the registration problem in Def. \ref{def_lddmm}. Seven subcortical structures $S^1,\dots,S^7$ were extracted from each image and the log-Jacobian $f^k_j = \log \left(\det D\ph_j\right)|_{\p S^k}$ of the estimated transformation $\ph_j$, restricted to the boundary of the structure $S^k$, was used to measure the shape variation with respect to the template. These maps $f^k_j$ were called ``surface deformation maps''. After performing principal component analysis on these maps followed by linear regression with the diagnosis (AD, MCI or NCS), it was found for example that AD and MCI, when compared to NCS is associated with a pronounced surface inward deformation in areas of the amygdala and the hippocampus and with a simultaneous outward deformation in the body and inferior lateral ventricles. These results are in agreement with previous neuroimaging findings and show that LDDMM can be used to highlight local shape variations related to AD.

\begin{figure}[ht]
  \centering
  \def\svgwidth{0.6\textwidth}
\executeiffilenewer{parallel_transport.svg}{parallel_transport.pdf}%
{inkscape -z -D --file=parallel_transport.svg %
--export-pdf=parallel_transport.pdf --export-latex}%
\import{images/}{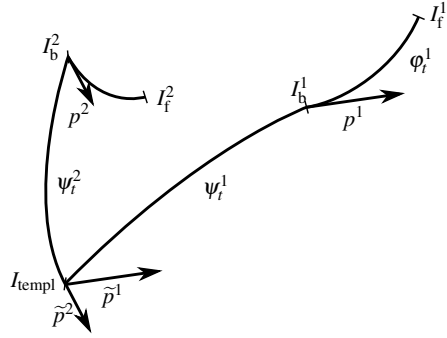}%

  \caption{The use of parallel transport in a longitudinal study AD. The baseline scan $I^1_b$ is registered to the follow-up scan $I^1_f$ via $\ph^1_t$ and the baseline scan is registered to the template image $I_{\mathrm{templ}}$ via $\ps^1_t$. The deformation $\ph^1$ is encoded in the initial momentum $p^1$, which is parallel transported along the path $\ps^1_t$ to $I_{\mathrm{templ.}}$ to obtain $\wt p^1$. In this way the changed between baseline and follow-up scans can be compared across a population of patients.}
  \label{fig_parellel_transport}
\end{figure}

\runinhead{Analysis of Longitudinal Data.} A more accurate assessment of disease states can be obtained by comparing two different scans of one patient, taken at two different times. Let $I_{\mathrm b}^j$ and $I_{\mathrm f}^j$ denote the baseline scan and the follow-up scan taken a few years later of the $j$-th patient respectively. Registering $I_{\mathrm b}^j$ to $I_{\mathrm f}^j$ via LDDMM computes a transformation $\ph_j$ such that $I_{\mathrm b}^j\circ\ph^{-1}_j \approx I_{\mathrm f}^j$, and also its generating vector field $u_t^j$. Sect. \ref{sec_geometry} shows that the entire vector field $u_t^j$ can be recovered from its value at $t=0$ via the \emph{Euler-Poincar\'e equation on the diffeomorphism group}, also called \emph{EPDiff} and introduced in equation \eqref{eq_epdiff}. This means that the initial vector field $u_0^j$ can be determined from the initial \emph{deformation momentum}, $p^j$. Thus, the shape differences between $I^j_{\mathrm b}$ and $I_{\mathrm f}^j$ are encoded in the deformation momentum $p^j$. 

To compare the deformation momenta $p^j$, $j=1,2,\dots$, for a set of different patients all baseline scans $I_{\mathrm b}^j$ are registered in a second step to a common template $I_{\mathrm{templ}}$. Then it is necessary to transport each of the deformation momenta $p^j$ from $I_{\mathrm b}^j$ to the common template and thereby obtain the corresponding $\wt p^j$. Thus it is possible to compare the evolution between the baseline and the follow-up scans, by its nature a very nonlinear object, by comparing the computed momenta $\wt p^j$, which are elements of a vector space. 

Regarding the transport operation several methods have been proposed. From a geometrical point of view, parallel transport from Riemannian geometry is the most natural operation and this has been used in \cite{Qiu2009b, Younes2007, Younes2008}. Since computing the parallel transport of the momentum along geodesics is numerically quite challenging, a first-order approximation called Schild's ladder was proposed in \cite{Lorenzi2011} as an alternative. Other methods that depend only on the end-deformation and not on the whole geodesic path were considered and compared in \cite{Fiot2012}. From the viewpoint of applications, there is, as of now, no consensus about which is the most appropriate method for the transport of deformation momenta.

\subruninhead{Longitudinal Study of the Shape of Hippocampi.} Parallel transport was used in \cite{Qiu2008} as the transport method to compare deformations of the hippocampus in subjects with early AD and healthy controls across a time span of two years. It was shown that the conversion from normal cognitive function to early AD in the time span between the baseline scan and the follow-up scan is associated with an inward deformation of the hippocampal tail. Subjects who were already diagnosed with AD at the time of the baseline scan on the other hand exhibited an inward deformation of the whole hippocampal body.

\runinhead{Propagation of Anatomical Information.} Registering two images $I_0$ and $I_1$ via a transformation $\ph$ gives us a voxel-to-voxel correspondence between these two images. Assuming that we are given a manual segmentation of the template image $I_0$, in which some or all voxels of $I_0$ are assigned membership to a labelled anatomical structure, we can propagate this segmentation via $\ph$ to the image $I_1$. This is the idea of registration-based or atlas-based segmentation, see \cite{Collins1992, Gee1993, Miller1993}. To remove the bias inherent in the choice of the template $I_0$, these multi-template registration techniques replace $I_0$ with a collection $(I^j_{\mathrm{templ}})_{j=1,\dots,N}$ of several manually segmented images. Each template $I^j_{\mathrm{templ}}$ is registered to $I_1$ and the transformation is used to propagate the segmentation of $I^j_{\mathrm{templ}}$ to $I_1$. Now there are $N$ potentially contradicting segmentations of $I_1$, that have to be combined using some classifier \emph{fusion} technique, such as majority voting \cite{Artaechevarria2009}, Bayesian modelling \cite{Warfield2004} or Markov random fields \cite{Fischl2002}. For subcortical structures of the brain, this sort of atlas-based segmentation was shown to outperform other methods \cite{Babalola2009}. It is possible to use atlas-based registration with a variety of image registration methods. However, a study that involved segmenting brain scans of mice \cite{Bai2012} has shown that the choice of the registration method is more important than the choice of fusion method. Thus in applications where accuracy is important, LDDMM may be preferred, despite having higher computational cost than some other registration methods.

\subruninhead{Automatic Labelling via Ontologies.} In the same spirit, Steinert-Threlkeld
 et al. \cite{SteinertThrelkeld2012} combined an ex-vivo scan of the left ventricle, manually parcellated and labelled, with the LDDMM registration method and an ontology query language to allow the medical practitioner to obtain quantitative and qualitative answers to questions like: ``In which region was significant tissue volume expansion observed between systole and diastole?'' and ``What was the average rate of expansion per region of interest?'' The ability to automatically answer these questions is a key step toward automating the diagnostic process.

\subruninhead{Patient-Specific Models for Atrial Fibrillation \cite{McDowell2012}.} Atrial fibrillation is a cardiac arrhythmia, characterized by the irregular propagation of electrocardial waves across the atrium. Advances in late-gadolinium enhanced MRI, allows the in-vivo localization of fibrotic tissue in the atrium, by diffusion tensor imaging (DTI). Although the DTI approach does not yet have the necessary resolution to determine the fibre orientation of the muscle fibres in-vivo, there already exist atlases with information about fibre orientation, obtained ex-vivo. Image registration can be used to propagate the fibre orientations from the atlas to the patient and thus obtain a patient-specific model of the atrium that includes both locations of fibrotic tissue and orientation of the muscle fibres. The resulting model can then be used to simulate the propagation of electrocardial waves and to predict the occurrence of arrhythmia in the patient's atrium.

\runinhead{Other Applications.} We cannot hope to give an exhaustive description of all the applications of LDDMM and its associated geometry to computational medicine in these notes. Among the omitted topics are: estimating the dimensionality of the anatomical shape variations \cite{Qiu2012}; generalizing geodesic regression to the anatomical shape manifold and computing the mean aging process of the brain across a population \cite{DAvis2010}; the use of parallel transport not only for longitudinal studies, but also to characterize the left-right asymetry of subcortical structures \cite{Qiu2009c}; applications to other diseases like schizophrenia \cite{Qiu2007} or cerebral palsy \cite{Faria2011}; addition of functional data to anatomical shapes \cite{Miller2009}. There are also applications outside the medical field to the study of variations of cell shape \cite{Rohde2008} and to construct generative models for cells \cite{Peng2009}.

\section{Geometry of Matching Problems}
\label{sec_geometry}

In order to better see the geometric properties of image registration with LDDMM, we will first formulate an abstract version of it. As we study this abstract problem, we will at each step show how it relates to the concrete example of image registration.

\runinhead{Abstract Formulation.} In the spirit of pattern theory we can formulate image registration as follows: a group of transformations acts on a space of objects and we are searching for the transformation that deforms a template object to a target object. The presentation here follows \cite{Bruveris2011}.

Let us model the group of transformations by a Lie group $G$ and the space of objects by a vector space $V$. We will in this section assume that both $G$ and $V$ are finite-dimensional in order to avoid questions about topologies, smoothness and dual spaces that arise when dealing with infinite-dimensional spaces. The process of deforming objects $I \in V$ by transformations $g \in G$ is modelled by a smooth map
\[
\ell : G \x V \to V,\qquad (g, I) \mapsto g.I\;.
\]
Note that $g.I$ is simply a notation for $\ell(g, I)$, i.e. the object $I$ transformed under $g$. Let $e \in G$ denote the neutral element of the group. We require $\ell$ to satisfy the following axioms
\begin{itemize}
\item $\ell(e, I) = I$ or $e.I = I$ for $I \in V$ and
\item $\ell(g, \ell(h, I)) = \ell(gh, I)$ or $g.(h.I) = (gh).I$ for $g, h \in G$ and $I \in V$,
\end{itemize}
The first axiom tells us that the identity transformation doesn't change the object while the second is an associativity axiom and allows us to write simply $gh.I$ for either $g.(h.I)$ or $(gh).I$. Such a map $\ell$ is called a {\it left action of the group $G$ on the vector space $V$}. An in-depth treatment of group actions, beyond what we will need for our purposes, can be found, e.g. in \cite{Michor2008b}.

\begin{example}
Consider the rotation group $SO(3)$ and the vector space $\R^3$. The action
\[
\ell : SO(3) \x \R^3 \to \R^3,\qquad (R,x) \mapsto Rx
\]
is given by matrix multiplication. The rules of matrix algebra imply that this is indeed a left action.
\end{example}

To generate deformations and to measure their ``size'' or ``energy'', we use the linearization of the Lie group $G$. The {\it Lie algebra $\mf g$ of $G$} is the tangent space of $G$ at the identity, i.e. $\mf g = T_e G$. Intuitively $\mf g$ consists of ``infinitesimal deformations''. The following points of view are equivalent:
\begin{itemize}
\item
Given a smooth curve $t \mapsto u_t \in \mf g$ of infinitesimal deformations, there exists a curve $t \mapsto g_t \in G$ in the group, which is the solution of the differential equation
\[
\p_t g_t = u_t g_t,\qquad g_0 = e\;.
\]
The curve $g_t$ is called the {\it flow} or {\it integral curve of $u_t$}.
\item
Given a smooth curve $t \mapsto g_t \in G$ of deformations, its velocity is $\p_t g_t \in T_{g_t} G$ and it defines a curve $u_t := (\p_t g_t) g_t^{-1} \in T_e G$ of infinitesimal deformations. The curve $u_t$ is called the {\it right-trivialized velocity of $g_t$}.
\end{itemize}

To complete the modelling of the matching problem we assume that both the Lie algebra $\mf g$ and the space $V$ of objects are endowed with inner products $\langle .,. \rangle_{\mf g}$ and $\langle .,. \rangle_V$ respectively. The kinetic energy of a curve $g_t$ of deformations is measured via its right-trivialized velocity
\[
E_{\mathrm{KE}}(u) = \tfrac 12 \int_0^1 \left| u_t \right|_{\mf g}^2 \ud t\;,
\]
where $|u|_{\mf g} = \sqrt{\langle u, u \rangle_{\mf g}}$ is the norm induced by the inner product. The inner product on $V$ will be used to measure the distance between objects. The matching problem can now we stated as follows.

\begin{definition}[Abstract Registration Problem]
\label{abstract_lddmm}
Given two objects $I_0, I_1 \in V$ find a curve $t \mapsto u_t \in \mf g$ that minimizes the energy
\begin{equation}
\label{abs_en}
E(u) = \tfrac 12 \int_0^1 \left| u_t \right|_{\mf g}^2 \ud t + \tfrac{1}{2\si^2} \left| g_1.I_0 - I_1\right|^2_V\;,
\end{equation}
where $g_1 \in G$ is the endpoint of the flow of $u_t$, i.e.
\[
\p_t g_t = u_t g_t,\qquad g_0 = e\;.
\]
The transformation $g_1$ then matches $I_0$ to $I_1$.
\end{definition}

We defer questions about existence of minimizers to Sect.~\ref{sec_analysis}. Our goal now is to study properties of the minimizing curves $u_t$. In particular, we want to see which properties of the minimizer are fixed by the group and what features of it are affected by the choice of the space of objects. We assume all objects to be sufficiently smooth. Thus, minima of $E$ are also critical points; so we will be interested in calculating the derivative $DE(u)$. In order to do that we need some more tools from geometry.

\runinhead{The Adjoint Action.} 
On the group $G$ we fix an element $g \in G$ and consider the map
\[
\on{conj}_g : G \to G,\qquad \on{conj}_g(h) = ghg^{-1}\;,
\]
called \emph{conjugation}. It satisfies $\on{conj}_g(e) = e$ and we denote its tangent map by
\[
\on{Ad}_g := T_e \on{conj}_g : T_e G \to T_e G\;.
\]
This map is called the {\it adjoint representation} of G. The following properties of $\on{conj}$ can be easily verified,
\begin{align*}
\on{conj}_g \circ \on{conj}_h &= \on{conj}_{gh} \\
\on{conj}_{g^{-1}} &= \left(\on{conj}_g\right)^{-1}.
\end{align*}
These properties imply the following differential versions,
\begin{align*}
\on{Ad}_g \on{Ad}_h &= \on{Ad}_{gh} \\
\on{Ad}_{g^{-1}} &= \left(\on{Ad}_g\right)^{-1}.
\end{align*}
Considered as a map of both variables, the operation $\on{Ad} : G \x \mf g \to \mf g$ defines a left action of $G$ on its Lie algebra $\mf g$. We also see that $\on{Ad}$ is a group homomorphism $\on{Ad} : G \to GL(\mf g)$ between $G$ and the group $GL(\mf g)$ of invertible linear maps on $\mf g$. This property is the reason for the name adjoint representation.

\runinhead{The Coadjoint Action.}
Again keeping $g \in G$ fixed we consider the linear map $\on{Ad}_g : \mf g \to \mf g$.  This map has a transpose $\on{Ad}^\ast : \mf g^\ast \to \mf g^\ast$ in the sense of linear algebra, defined via
\[
\left\langle \on{Ad}_g^\ast \mu, u \right\rangle_{\mf g^\ast \x \mf g} = 
\left\langle \mu, \on{Ad}_g u \right\rangle_{\mf g^\ast \x \mf g}\;,
\]
for $\mu \in \mf g^\ast$ and $u \in \mf g$. This map $\on{Ad}^\ast$ is called the {\it coadjoint representation} of $G$. Similarly to $\on{Ad}$ it satisfies
\begin{align*}
\on{Ad}^\ast_g \on{Ad}^\ast_h &= \on{Ad}^\ast_{hg} \\
\on{Ad}^\ast_{g^{-1}} &= \left(\on{Ad}^\ast_g\right)^{-1}\;.
\end{align*}
Considered as a map of both variables, the map $\on{Ad}^\ast : G \x \mf g^\ast \to \mf g^\ast$ defines a right action of $G$ on $\mf g^\ast$. It is not a left action, because in the associativity rule the order of the multiplication is changed. To make it into a left action we can consider the map $(g,\mu) \mapsto \on{Ad}^\ast_{g^{-1}} \mu$. The name coadjoint representation stems from the way of looking at $\on{Ad}^\ast$ as a group antihomomorphism $\on{Ad}^\ast : G \to GL(\mf g^\ast)$.

\runinhead{Variations of the Flow.}
Why is this interlude necessary? In order to differentiate the term $\left| g_1.I_0 - I_1\right|^2_V$ in \eqref{abs_en} we need to know how to differentiate $g_1$ with respect to $u_t$, since $g_1$ is defined as the flow
\[
\p_t g_t = u_t g_t,\qquad g_0 = e\;,
\]
of $u_t$ at time $t=1$. This is given in the following lemma, the proof of which is adapted from \cite{Vialard2009} and \cite{Beg2005}.

\begin{lemma} 
\label{delta_u_right}
Let $t \mapsto u_t \in \mf g$ be a smooth curve and $(\ep, t) \mapsto u^\ep_t$ a smooth variation of this curve. Denote by $\de u_t := \p_\ep|_{\ep=0} \left(u_t^\ep\right)$ an infinitesimal variation of $u_t$. Then
\[
\de g_t := \p_\ep|_{\ep=0} \left(g_t^\ep\right) = g_t \int_0^t \on{Ad}_{g_s^{-1}} \de u_s \ud s\;.
\]
\end{lemma}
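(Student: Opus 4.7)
The plan is to reduce the claim to a first-order ODE on $\mf g$ for the left-trivialized variation
$\xi_t := g_t^{-1}\de g_t \in \mf g$ (interpreted intrinsically as $T_{g_t} L_{g_t^{-1}}(\de g_t)$). Once I establish $\p_t \xi_t = \Ad_{g_t^{-1}} \de u_t$ with initial condition $\xi_0 = 0$, a single integration and left multiplication by $g_t$ will deliver the stated formula.

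Starting from a smooth two-parameter family $g_t^\ep$ satisfying $\p_t g_t^\ep = u_t^\ep g_t^\ep$ and $g_0^\ep = e$, the computation would proceed in three short steps. First, differentiating $g_t g_t^{-1} = e$ in $t$ produces the identity $\p_t(g_t^{-1}) = -\,g_t^{-1} u_t$. Second, differentiating the flow equation in $\ep$ at $\ep = 0$ and using the commutativity $\p_t \p_\ep = \p_\ep \p_t$ together with the Leibniz rule applied to the smooth map $(u,g) \mapsto T_e R_g(u)$ yields $\p_t(\de g_t) = (\de u_t)\, g_t + u_t\,\de g_t$. Third, inserting these two identities into the derivative of $\xi_t = g_t^{-1}\de g_t$ causes the two $u_t$-terms to cancel, leaving $\p_t \xi_t = g_t^{-1}(\de u_t)\,g_t = \Ad_{g_t^{-1}} \de u_t$, where the final equality is simply the definition of $\Ad$ via conjugation. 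Since $g_0^\ep \equiv e$ forces $\de g_0 = 0$ and hence $\xi_0 = 0$, the ODE integrates to the formula.

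The main obstacle is conceptual rather than computational: the products $u_t g_t$, $g_t^{-1} u_t$ and $g_t^{-1}\de g_t$ must be interpreted intrinsically via tangent maps of left and right translations on $G$. In a matrix Lie group these are honest matrix products and the three-step calculation collapses to one line; in an abstract Lie group I would have to verify that the Leibniz step really is the total derivative of $(u,g) \mapsto T_e R_g(u)$ in both arguments, or equivalently work in a chart around $e$. This is the only place where care is required; the algebraic bookkeeping is otherwise routine and parallels the derivations in \cite{Vialard2009, Beg2005}.
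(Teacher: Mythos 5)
Your proposal is correct and follows essentially the same route as the paper's proof: differentiating the flow equation in $\ep$, computing $\p_t\bigl(g_t^{-1}\de g_t\bigr)$ so the $u_t$-terms cancel, and integrating $\Ad_{g_t^{-1}}\de u_t$ from the initial condition $\de g_0 = 0$. Your remark about interpreting the products via tangent maps of translations is a fair caveat, but it does not change the argument.
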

\begin{proof} For all $\ep$ we have
\[
\p_t g_t^\ep =u_t^\ep g^{\ep}_{t},\qquad g_{0}^{\ep}=e\;.
\]
Taking the $\varepsilon$-derivative of this equality yields the ODE
\[
\p_t \p_\ep|_{\ep=0} \left( g_t^\ep \right) = \de u_t g_t + u_t \de g_t \;,
\]
and so we obtain
\begin{align*}
\p_t \left(g_t^{-1} \de g_t \right) 
&= -g_t^{-1} u_t g_t g_t ^{-1} \de g_t + g_t^{-1}\left( \de u_t g_t + u_t \de g_t \right) \\
&= g_t^{-1} \de u_t g_t \\
&= \on{Ad}_{g_t^{-1}} \de u_t\;.
\end{align*}
Now we integrate both sides from $0$ to $t$ and multiply by $g_t$ from the left to obtain
\[
\de g_t = g_t \int_0^t \on{Ad}_{g_s^{-1}} \de u_s \ud s\;,
\]
as required.
\qed
\end{proof}

The second tool we will need to compute the derivative $DE(u)$ is a map that describes the relation between the group $G$ and the space $V$ it acts upon. This map is called the \emph{momentum map}.

\runinhead{The Momentum Map.}

Starting with the action $\ell: G \x V \to V$ of a Lie group $G$ on a vector space $V$, we fix $I \in V$ and consider the map $\ell^I : G \to V$ given by $\ell^I(g) = \ell(g,I)$. The derivative of this map at $e \in G$ is $T_e \ell^I : \mf g \to T_I V$ and it may be interpreted, if we allow $I$ to vary, as a vector field on $V$, i.e. now keep $u \in \mf g$ fixed and consider
\begin{equation}
\label{fund_vec}
\ze_u : V \to TV,\qquad I \mapsto T_e \ell^I.u\;.
\end{equation}
Thus $\ze : \mf g \to \mf X(V)$ assigns to each Lie algebra element $u$ a vector field $\ze_u$ on $V$. These are called the \emph{fundamental vector fields} of the $G$-action. We will also use the notation $\ze_u(I) = u.I$.

The tangent bundle $TV$ of $V$ can be identified via $TV \cong V \x V$ with two copies of $V$, the first containing basepoints and the second the tangent vectors. Similarly we can identify the cotangent bundle $T^\ast V$ with the product $T^\ast V \cong V \x V^\ast$. Now take an element $(I, \pi) \in T^\ast V$. The pairing
\[
\left\langle \pi, \ze_u(I)\right\rangle_{V^\ast\x V}\;,
\]
is linear in $u \in \mf g$ as can be seen from \eqref{fund_vec} and thus $u \mapsto \left\langle \pi, \ze_u(I)\right\rangle_{V^\ast\x V}$ is a linear form on $\mf g$ or equivalently an element of $\mf g^\ast$. Denote this element by $I \diamond \pi$. The defining equation for $I \diamond \pi \in \mf g^\ast$ is
\[
\left\langle I \diamond \pi, u \right\rangle_{\mf g^\ast \x g} = \left\langle \pi, \ze_u(I) \right\rangle_{V^\ast \x V}\;,
\]
and $\diamond$ is a map $\diamond : T^\ast V \to \mf g^\ast$, called the \emph{momentum map} of the cotangent lifted action of $G$ on $T^\ast V$. We shall explain the action of $G$ on $T^\ast V$ in the following paragraph.

\runinhead{Momentum Maps in Geometric Mechanics.}
In geometric mechanics, momentum maps generalize the notions of linear and angular momenta. For a mechanical system, whose configuration space is a manifold $M$ acted on by a Lie group $G$, the momentum map $\diamond: T^\ast M \to \mf g^\ast$ assigns to each element of the phase space $T^\ast M$ a \emph{generalized momentum} $I \diamond \pi$ in the dual $\mf g^\ast$ of the Lie algebra. For example, the momentum map for spatial translations is the linear momentum, and for rotations it is the angular momentum.

One important feature of the momentum map in geometric mechanics is due to Noe\-ther's theorem. Noether's theorem states that if the Hamiltonian of the system under consideration is invariant under the action of $G$, then the generalized momentum $I \diamond \pi$ is a constant of motion. This theorem enables one generate conservation laws from symmetries. See \cite{Holm2009b, Marsden1999} for more details on momentum maps and geometric mechanics.

\runinhead{Tangent and Cotangent Lifted Actions.}

The action of a Lie group $G$ on the vector space $V$ is a map $\ell: G \x V \to V$. Fixing an element $g \in G$ we obtain a map $\ell_g : V \to V$, which we can differentiate to obtain $T \ell_g : TV \to TV$.
It can be checked that the map of both variables
\[
T_2 \ell : G \x TV \to TV\;,
\]
is a left action of $G$ on the space $TV$. Here $T_2 \ell$ denotes the derivative of $\ell$ with respect to the second variable. The map $T \ell_g : V \x V \to V \x V$, being a derivative, is linear in the second variable, i.e. for each $I$ the map 
\[
T_I \ell_g : V  \cong T_I V \to T_{g.I} V \cong V\;,
\]
is linear and thus has a transpose
\[
T_I^\ast \ell_g : V^\ast  \cong T^\ast_{g.I} V \to T^\ast_{I} V \cong V^\ast\;.
\]
This allows us to define the \emph{cotangent lifted action} of $G$ on the cotangent bundle $T^\ast V \cong V \x V^\ast$ via
\[
g.(I, \pi) = \left( \ell(g, I), T_{g.I}^\ast \ell_{g^{-1}}. \pi\right)\;,
\]
for $(I, \pi) \in T^\ast V$. Note that the presence of the inverse makes this a left action. The following lemma shows that the momentum map is \emph{equivariant} with respect to the cotangent lifted action.

\begin{lemma}
For $g \in G$, $u \in \mf g$, $I \in V$ and $\pi \in V^\ast$ we have
\begin{itemize}
\item
$g. \ze_u(g^{-1}.I) = \ze_{\on{Ad}_g u}(I)$, and
\item
$g.I \diamond g.\pi = \on{Ad}^\ast_{g^{-1}} \left(I \diamond \pi\right)$.
\end{itemize}
\end{lemma}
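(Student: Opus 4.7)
The plan is to prove both identities from the basic definitions of the fundamental vector fields $\zeta$, the momentum map $\diamond$, and the cotangent lifted action, using the first identity as a stepping stone for the second. The whole argument is essentially bookkeeping in the dual pairings, and I would do the two parts in order.

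For the first identity, I would use the curve characterisation $\zeta_u(J) = \p_t|_{t=0}(c_t.J)$, valid for any smooth curve $c_t$ in $G$ with $c_0=e$ and $\p_t|_{t=0}c_t=u$. Pulling the fixed $g$ inside the differentiation — which amounts to applying the linear map $T_{g^{-1}.I}\ell_g$ — and then using the group-action axiom $g.(c_t.(g^{-1}.I)) = (g c_t g^{-1}).I$, one gets
$$g.\zeta_u(g^{-1}.I) = \p_t|_{t=0}\bigl((g c_t g^{-1}).I\bigr) = \p_t|_{t=0}\bigl(\on{conj}_g(c_t).I\bigr).$$
The chain rule together with $\p_t|_{t=0}\on{conj}_g(c_t) = \on{Ad}_g u$ then turns the right-hand side into $T_e\ell^I.\on{Ad}_g u = \zeta_{\on{Ad}_g u}(I)$, which is the claim.

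For the second identity, I would pair $g.I\diamond g.\pi$ against an arbitrary $u\in\mf g$ and unwind the definition of $\diamond$:
$$\bigl\langle g.I\diamond g.\pi, u\bigr\rangle_{\mf g^\ast\x\mf g} = \bigl\langle g.\pi, \zeta_u(g.I)\bigr\rangle_{V^\ast\x V}.$$
Since $g.\pi = T^\ast_{g.I}\ell_{g^{-1}}.\pi$ by definition of the cotangent lifted action, the transpose property moves the group element onto the tangent vector: the right-hand side equals $\langle\pi, T_{g.I}\ell_{g^{-1}}.\zeta_u(g.I)\rangle = \langle\pi, g^{-1}.\zeta_u(g.I)\rangle$. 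Applying the first identity with $g$ replaced by $g^{-1}$ rewrites this as $\langle\pi,\zeta_{\on{Ad}_{g^{-1}}u}(I)\rangle$. One more use of the definition of $\diamond$ and then of $\on{Ad}^\ast$ gives
$$\bigl\langle\pi,\zeta_{\on{Ad}_{g^{-1}}u}(I)\bigr\rangle = \bigl\langle I\diamond\pi,\on{Ad}_{g^{-1}}u\bigr\rangle = \bigl\langle\on{Ad}^\ast_{g^{-1}}(I\diamond\pi), u\bigr\rangle,$$
and since $u\in\mf g$ was arbitrary the claimed equality follows.

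I do not anticipate a serious obstacle; the whole proof is careful dualisation. The one point that demands attention is the direction of the cotangent pullback: $g.\pi$ lives over $g.I$ and is defined with $\ell_{g^{-1}}$ rather than $\ell_g$, so transposing it back onto the fundamental vector field naturally produces an $\on{Ad}_{g^{-1}}$ (not $\on{Ad}_g$) inside the pairing, which after passing to the coadjoint gives precisely $\on{Ad}^\ast_{g^{-1}}$ on the right-hand side — consistent with the earlier remark that $(g,\mu)\mapsto\on{Ad}^\ast_{g^{-1}}\mu$ is the associated left action of $G$ on $\mf g^\ast$.
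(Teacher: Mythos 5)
Your proposal is correct and follows essentially the same route as the paper: the first identity is obtained by differentiating the associativity relation $g.(c_t.(g^{-1}.I)) = (gc_tg^{-1}).I$ along a curve through $e$ with velocity $u$, and the second by pairing against an arbitrary $u\in\mf g$, transposing the cotangent lift, and invoking the first identity with $g$ replaced by $g^{-1}$. No gaps.
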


\begin{proof}
First note that $g.\ze_u(g^{-1}.I)$ is a slightly informal way to denote $g$ acting on $\ze_u(g^{-1}.I)$ via the cotangent lifted action; i.e.,
\[
g.\ze_u(g^{-1}.I) = T_{g^{-1}.I} \ell_g. \ze_u(g^{-1}.I)\;.
\]
To prove the first identity take a curve $h(t) \in G$ with $h(0) = e$ and $\p_t g|_{t=0}=u$. Via associativity, we have
\[
\ell(g, h(t).g^{-1}.I) = \ell(gh(t)g^{-1},I)\,,
\]
and by differentiating this identity we obtain
\begin{align*}
T_{g^{-1}.I} \ell_g.\ze_u(g^{-1}.I) &= T_e \ell^I.\on{Ad}_gu \\
g.\ze_u(g^{-1}.I) &= \ze_{\on{Ad}_g u}(I)\;.
\end{align*}
For the second identity note that $g.I \diamond g.\pi$ is a short way of writing
\[
g.I \diamond g.\pi = \diamond\,(g.(I,\pi)) = \diamond\,(g.I, T^\ast_{g.I} \ell_{g^{-1}}.\pi) = g.I \diamond T^\ast_{g.I} \ell_{g^{-1}}.\pi\,.
\]
Now take any $u \in \mf g$ and consider the pairing
\begin{multline*}
\langle g.I \diamond g.\pi, u \rangle_{\mf g^\ast \x \mf g} =
\langle T^\ast_{g.I} \ell_{g^{-1}}.\pi, \ze_u(g.I) \rangle_{V^\ast \x V} = \\ =
\langle \pi, T_{g.I} \ell_{g^{-1}}.\ze_u(g.I) \rangle_{V^\ast \x V} = 
\langle \pi, \ze_{\on{Ad}_{g^{-1}} u}(I) \rangle_{V^\ast \x V} = \\ =
\langle \on{Ad}^\ast_{g^{-1}}\left(I\diamond \pi\right), u \rangle_{\mf g^\ast \x \mf g}\,.
\end{multline*}
This concludes the proof.
\qed
\end{proof}

\runinhead{The $\flat$-map.}
The final piece of notation is the $\flat$-map of a vector space associated to an inner product. On the vector space $V$ the $\flat$-map is defined as
\[
\flat : V \to V^\ast,\qquad \langle u^\flat, v \rangle_{V^\ast\x V} = \langle u, v \rangle\;,
\]
where the pairing on the left side is the canonical pairing between $V^\ast$ and $V$ and on the right side we have the inner product $\langle.,,\rangle_V$. Each inner product gives rise to a $\flat$-map and we have two of them in our framework, one on $\mf g$ and one on $V$. As there is no risk of confusion between them, we will use the same notation for both. Inspired by their appearance in physics, the elements $u \in \mf g$ are called \emph{velocities} while the dual objects $u^\flat \in \mf g^\ast$ are called \emph{momenta}.

\runinhead{Derivative of the Matching Energy.}
We now have assembled all of the tools we need to calculate the derivative $DE(u)$.

\begin{theorem}
\label{lddmm_DE}
Consider the matching energy
\[
E(u) = \frac 12 \int_0^1 \left| u_t \right|_{\mf g}^2 \ud t + \frac 1{2\si^2} \left| g_1.I_0 - I_1 \right|_V^2\;.
\]
Its derivative is given by
\begin{equation}
\label{eq_DE}
DE(u)(t) = u_t^\flat + g_t I_0 \diamond g_t g_1^{-1} \pi\;,
\end{equation}
with $\pi = \frac 1{\si^2} (g_1I_0 - I_1)^\flat \in V^\ast \cong T^\ast_{g_1.I_0} V$.
\end{theorem}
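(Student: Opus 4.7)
The plan is to split $E = E_{\mathrm{kin}} + E_{\mathrm{match}}$ and differentiate each summand separately against a variation $\delta u$, reading off the coefficient of $\delta u_t$ in the resulting integral over $[0,1]$. For the kinetic term, bilinear differentiation together with the $\flat$-map on $\mf g$ immediately produces $\int_0^1 \langle u_t^\flat, \delta u_t\rangle_{\mf g^*\x \mf g}\,\ud t$, accounting for the $u_t^\flat$ piece of \eqref{eq_DE}. All the content is in the matching term, where three just-established tools combine: Lemma \ref{delta_u_right} produces the variation of the flow, the defining relation for $\diamond$ converts $V$-pairings into $\mf g^*$-pairings, and the two equivariance identities (for $\zeta$ and for $\diamond$) shuffle everything through the group action.

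For the matching term, first apply the $V$-$\flat$-map to obtain $\sigma^{-2}\langle g_1.I_0 - I_1, \delta(g_1.I_0)\rangle_V = \langle \pi, \delta(g_1.I_0)\rangle_{V^*\x V}$. To compute $\delta(g_1.I_0)$, set $X := \int_0^1 \Ad_{g_s^{-1}}\delta u_s\,\ud s$, so that Lemma \ref{delta_u_right} reads $\delta g_1 = g_1 X$. The action associativity $g_1\exp(tX).I_0 = g_1.(\exp(tX).I_0)$ then gives $\delta(g_1.I_0) = g_1.\zeta_X(I_0)$, and the first equivariance identity of the preceding lemma rewrites this as $\zeta_{\Ad_{g_1}X}(g_1.I_0)$, an integral of fundamental vector fields based at the endpoint $g_1.I_0$.

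The defining relation for $\diamond$ followed by dualization of $\Ad$ then yields
\[
\langle \pi, \delta(g_1.I_0)\rangle_{V^*\x V} = \int_0^1 \langle \Ad^*_{g_1 g_s^{-1}}(g_1.I_0 \diamond \pi), \delta u_s\rangle_{\mf g^*\x \mf g}\,\ud s,
\]
and applying the second equivariance identity with $g = g_s g_1^{-1}$ rewrites the coadjoint term as $g_s.I_0 \diamond (g_s g_1^{-1}).\pi$, which is exactly the second summand of \eqref{eq_DE} at $t = s$. The main obstacle is purely bookkeeping: making sure the left trivialization built into Lemma \ref{delta_u_right} is reconciled with the cotangent lifted action, and that $\Ad$, $\Ad^*$, $\zeta$ and $\diamond$ each end up on the correct side of the duality pairings so that the basepoint $g_s.I_0$ and the transported momentum $(g_s g_1^{-1}).\pi$ emerge with the right composition.
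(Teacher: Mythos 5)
Your proposal is correct and takes essentially the same route as the paper: split the energy, obtain $u_t^\flat$ from the kinetic term, use Lemma~\ref{delta_u_right} for $\de g_1$, and then combine the defining relation for $\diamond$, dualization of $\on{Ad}$, and the equivariance of the momentum map to arrive at $g_t.I_0 \diamond g_t g_1^{-1}.\pi$. The only (immaterial) difference is bookkeeping: you keep the pairing at the endpoint $g_1.I_0$ via the equivariance identity for fundamental vector fields and apply the $\diamond$-equivariance with $g = g_s g_1^{-1}$, whereas the paper first moves $\pi$ back to the basepoint $I_0$ through the cotangent lifted action of $g_1^{-1}$, applies $\diamond$ there, and uses the equivariance with $g = g_t$.
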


\begin{proof}
The derivative is a curve $t \mapsto DE(u)(t) \in \mf g^\ast$ and the pairing between $DE(u)$ and a variation $\de u$ is given by
\[
\left\langle DE(u), \de u \right\rangle = \int_0^1 \left\langle DE(u)(t), \de u_t \right\rangle_{\mf g^\ast \x \mf g} \ud t\;.
\]
From
\[
\left\langle D\left( \frac 12 \int_0^1 \left|u_t \right|_{\mf g}^2 \ud t \right) , \de u \right\rangle = \int_0^1 \left\langle u_t, \de u_t \right\rangle_{\mf g} \ud t
= \int_0^1 \left\langle u_t^\flat, \de u_t \right\rangle_{\mf g^\ast \x \mf g} \ud t\;,
\]
we see that the derivative of the kinetic energy part is simply $u_t^\flat$. Now for the matching term,
\[
\left\langle D\left( \frac{1}{2\si^2} \left| g_1.I_0 - I_1 \right|_V^2 \right), \de u \right\rangle = \frac 1{\si^2} \left\langle g_1.I_0 - I_1, \de g_1.I_0 \right\rangle_{V^\ast \x V} = \left\langle \pi, \de g_1.I_0 \right\rangle_{V^\ast \x V}\;.
\]
We apply Lem. \ref{delta_u_right} to express $\de g_1$ via $\de u$ and the we use adjoint operations to isolate $\de u$. Consequently, we find
\begin{align*}
\left\langle \pi, \de g_1.I_0 \right\rangle_{V^\ast \x V} &=
\left\langle \pi, g_1.\int_0^1 \on{Ad}_{g_t^{-1}} \de u_t \ud t.I_0 \right\rangle_{V^\ast \x V} \\
&= \int_0^1 \left\langle g_1^{-1}.\pi, \left( \on{Ad}_{g_t^{-1}} \de u_t \right).I_0 \right \rangle_{V^\ast \x V} \ud t \\
&= \int_0^1 \left\langle I_0 \diamond g_1^{-1}.\pi, \on{Ad}_{g_t^{-1}} \de u_t \right \rangle_{\mf g^\ast \x \mf g} \ud t \\
&= \int_0^1 \left\langle \on{Ad}_{g_t^{-1}}^\ast \left( I_0 \diamond g_1^{-1}.\pi \right), \de u_t \right \rangle_{\mf g^\ast \x \mf g} \ud t \\
&= \int_0^1 \left\langle g_t.I_0 \diamond g_tg_1^{-1}.\pi, \de u_t \right \rangle_{\mf g^\ast \x \mf g} \ud t\;.
\end{align*}
And thus we obtain the result. \qed
\end{proof}

\runinhead{Image Matching.}
In image matching the group of transformations is taken to be the group $\on{Diff}(\R^3)$ of diffeomorphisms of $\R^3$, i.e., smooth invertible maps $\ph : \R^3 \to \R^3$ with smooth inverses. The space of objects is $\mc F(\R^3)$, the space of real-valued smooth functions on $\R^3$, and the action is given by
\[
\ell : \on{Diff}(\R^3) \x \mc F(\R^3) \to \mc F(\R^3), \qquad (\ph, I) \mapsto I\circ\ph^{-1}\;.
\]
Due to the inverse in the definition, the voxel $\ph(x)$ of the transformed image has the same grey-value as the voxel $x$ of the original image. We will postpone the discussion of analytical aspects of $\on{Diff}(\R^3)$ to Sect. \ref{sec_analysis} and for now assume all objects are sufficiently smooth for the necessary operations.

\begin{remark}[Convenient Calculus]
The discussion here can be made rigorous by considering the group
\[
\on{Diff}_{H^\infty}(\R^3) = \left\{ \ph\,:\, \on{Id}-\ph \in H^\infty(\R^3) \right\}
\]
of all diffeomorphisms $\ph$, such that $\on{Id}-\ph$ lies in the intersection $H^\infty(\R^3)$ of all Sobolev spaces. The group $\on{Diff}_{H^\infty}(\R^3)$ is a smooth regular Fr\'echet-Lie group. For the space of images we can take either $\mc F(\R^3) = H^\infty(\R^3)$ functions with square-integrable derivatives or $\mc F(\R^3) = C_c^\infty(\R^3)$ compactly supported functions. Then the action
$\ell : \on{Diff}_c(\R^3) \x C_c^\infty(\R^3) \to C_c^\infty F(\R^3)$
is smooth in the sense of convenient calculus \cite{Michor1997} and all the operations described below can be interpreted in that framework. See \cite{Michor2012b_preprint} for details on diffeomorphism groups with other decay properties.
\end{remark}

The Lie algebra of $\on{Diff}(\R^3)$ is $\mf X(\R^3)$, the space of vector fields on $\R^3$. Given a time-dependent vector field $t \mapsto u_t \in \mf X(\R^3)$ its flow is defined by the differential equation
\[
\p_t \ph_t(x) = u_t\left(\ph_t(x)\right), \qquad \ph_0(x) = x, \qquad x \in \R^3 \;.
\]
Let us assume that we are given a norm on $\mf X(\R^3)$, defined via a positive, self-adjoint differential operator $L$ as follows,
\begin{equation}
\label{L_norm}
\left\langle u, v \right\rangle_L = \int_{\R^3} u(x) \cdot Lv(x) \ud x \;.
\end{equation}
For example the $H^1$-norm
\[
\left\langle u, v \right\rangle_{H^1} = \int_{\R^3} u(x) \cdot v(x) + \al^2 \sum_{i=1}^3 \nabla u^i(x) \cdot \nabla v^i(x) \ud x\;,
\]
can be defined via the operator $Lu = u - \al^2 \De u$, where the Laplace operator is understood to act componentwise on $u$. 

The dual space of $\mf X(\R^3)$ is the space of distributions. We consider only the smooth dual, that is the space $\mf X(\R^3)^\ast := \left\{ Lu \,:\, u \in \mf X(\R^3)\right\}$ generated by the $\flat$-map. As the duality pairing between $\mf X^\ast(\R^3)$ and $\mf X(\R^3)$ we choose the $L^2$-pairing, i.e.
\[
\left\langle \al, u \right\rangle_{\mf X(\R^3)^\ast \x \mf X(\R^3)} = \int_{\R^3} \al(x) \cdot u(x) \ud x\;.
\]
Thus we see that the $\flat$-map of the $\langle .,.\rangle_L$-inner product is given by $u^\flat = Lu$.

On the space of images we use the $L^2$-inner product $\langle I, J \rangle_{L^2} = \int_{\R^3} I(x)J(x) \ud x$. Again we don't look at the whole dual space, but only at the subspace generated by functionals of the form $I \mapsto \int_{\R^3} \pi I \ud x$ with $\pi \in \mc F(\R^3)$. Thus the canonical pairing is given by
\[
\langle \pi, I \rangle_{\mc F(\R^3)^\ast \x \mc F(\R^3)} = \int_{\R^3} \pi(x) I(x) \ud x\;.
\]
The $\flat$-map in this case is the identity, $I^\flat = I$. However the distinction between $\mc F(\R^3)$ and its dual $\mc F(\R^3)^\ast$ is still important, because $\on{Diff}(\R^3)$ will act differently on the spaces.

The infinitesimal action of $u \in \mf X(\R^3)$ on $I \in \mc F(\R^3)$ can be computed via
\[
\ze_u(I) = \p_t|_{t=0} \ph_t.I\;,
\]
where $t \mapsto \ph_t$ is a curve with $\ph_0 = \on{Id}$ and $\p_t|_{t=0} \ph_t = u$. Then
\[
\ze_u(I) = \p_t|_{t=0} \left(I \circ \ph_t^{-1}\right) = -\nabla I \cdot u\;.
\]
This allows us to compute the momentum map
\begin{align*}
\left\langle I \diamond \pi, u \right\rangle_{\mf X(\R^3)^\ast \x \mf X(\R^3)}
&= \left\langle \pi, \ze_u(I) \right\rangle_{\mc F(\R^3)^\ast \x \mc F(\R^3)} \\
&= -\int_{\R^3} \pi(x) \nabla I(x) \cdot u(x) \ud x \\
&= \left\langle -\pi \nabla I, u \right\rangle_{\mf X(\R^3)^\ast \x \mf X(\R^3)}\;.
\end{align*}
Thus, in this case, $I \diamond \pi = -\pi \nabla I$.

The last pieces of the geometrical framework are the lifted tangent and cotangent actions. The action of $\on{Diff}(\R^3)$ on $\mc F(\R^3)$ is linear, i.e. $\ph.(a I + b J) = a(\ph.I) + b (\ph.J)$ and so the tangent action on $T\mc F(\R^3) \cong \mc F(\R^3) \x \mc F(\R^3)$ coincides with the action on $\mc F(\R^3)$,
\[
\ph.(I, U) = (I \circ \ph^{-1}, U\circ\ph^{-1})\;.
\]
In particular we don't have to keep track of the basepoint. To compute the dual action on $\mc F(\R^3)^\ast$ we use the definition
\begin{align*}
\langle \ph.\pi, U \rangle_{\mc F(\R^3)^\ast \x \mc F(\R^3)}
&=\langle \pi, \ph^{-1}.U \rangle_{\mc F(\R^3)^\ast \x \mc F(\R^3)} \\
&= \int_{\R^3} \pi(x) U(\ph(x)) \ud x \\
&= \int_{\R^3} \left| \det D\ph^{-1}(x) \right| \pi\left(\ph^{-1}(x)\right) U(x) \ud x \\
&= \left\langle \left| \det D\ph^{-1}(x) \right| \pi \circ \ph^{-1}, U \right\rangle_{\mc F(\R^3)^\ast \x \mc F(\R^3)}
\end{align*}
with $\pi \in \mc F(\R^3)^\ast$ and $U \in \mc F(\R^3)$. Thus the cotangent lifted action is given by
\[
\ph.(I, \pi) = \left(I \circ \ph^{-1}, \left| \det D\ph^{-1}(x) \right| \pi \circ \ph^{-1} \right)\;,
\]
and we see that the objects dual to images transform as densities.

Now we can compute the criticality condition from Thm. \ref{lddmm_DE},
\[
DE(u)(t) = u_t^\flat + \ph_t.I_0 \diamond \ph_t\ph_1^{-1}.\pi\;,
\]
with $\pi = \frac{1}{\si^2}\left(\ph_1.I_0 - I_1\right)^\flat$. To simplify the formulas, let us define $\ph_{t,1} := \ph_t \circ \ph_1^{-1}$, which denotes the flow of $u_t$ from time 1 backwards to $t$. In general $\ph_{t,s} := \ph_t \circ \ph_s^{-1}$ is the solution of
\[
\p_t \ph_{t,s}(x) = u_t\left(\ph_{t,s}(x)\right),\qquad \ph_{s,s}(x) = x\;.
\]
So we have
\[
DE(u)(t) = Lu^t - \left| \det D\ph_{t,1}^{-1}(x) \right| \left(\pi \circ \ph_{t,1}^{-1}\right) \nabla \left(\ph_t.I_0 \right)\;,
\]
and
\begin{multline*}
\pi \circ \ph_{t,1}^{-1} = \frac 1{\si^2} \left(I_0\circ\ph_1^{-1} - I_1\right) \circ \ph_1 \circ \ph_t^{-1} = \\
= \frac 1{\si^2} \left(I_0 \circ \ph_t^{-1} - I_1 \circ \ph_1 \circ \ph_t^{-1} \right) 
= \frac 1{\si^2} \left(\ph_t.I_0 - \ph_{t,1}.I_1\right)\;.
\end{multline*}
Hence the derivative is given by
\[
DE(u)(t) = Lu_t - \frac {1}{\si^2} \left| \det D\ph_{t,1}^{-1}(x) \right| 
\left(\ph_t.I_0 - \ph_{t,1}.I_1\right) \nabla \left(\ph_t.I_0\right)\;,
\]
and critical points of $E$ satisfy
\[
Lu_t = \frac {1}{\si^2} \left| \det D\ph_{t,1}^{-1}(x) \right| 
\left(\ph_t.I_0 - \ph_{t,1}.I_1\right) \nabla \left(\ph_t.I_0\right)\;.
\]
This formula was first derived in \cite{Beg2005}, where it was used to implement a gradient descent method for $E$, which enabled computation of a numerical solution of the registration problem.

\runinhead{Conservation of Momentum.}
Returning to the general framework let us have a closer look at the equation \eqref{eq_DE} for the derivative and the information contained therein. Let $u_t$ be a critical point of the registration problem in Def. \ref{abstract_lddmm}. Then 
\begin{equation}
\label{DE_crit}
u_t^\flat = -g_t.I_0 \diamond g_tg_1^{-1}.\pi\;,
\end{equation}
which we can reformulate as
\begin{align}
u_t^\flat &= -\on{Ad}_{g_t^{-1}}^\ast \left(I_0 \diamond g_1^{-1}.\pi\right) \\
\on{Ad}_{g_t}^\ast u_t^\flat &= I_0 \diamond g_1^{-1}.\pi\;.
\label{ep_int_diamond}
\end{align}
Now note that the right hand side of \eqref{ep_int_diamond} does not depend on time any more while the left hand side doesn't depend on $V$ any more. As the right hand side is independent of $t$, we can differentiate the identity to obtain
\begin{equation}
\label{ep_int}
\p_t \left(\on{Ad}_{g_t}^\ast u_t^\flat \right) = 0\;.
\end{equation}

\runinhead{Differentiating $\on{Ad}$ and $\on{Ad}^\ast$.}
It is time to introduce some more tools from geometry related to the derivatives of the adjoint and coadjoint representations. Differentiating \eqref{ep_int} with respect to $u_t^\flat$ is not a problem, because $\on{Ad}^\ast_{g_t}$ is a linear transformation. What we need to know, is how to differentiate the expression with respect to $g_t$. 

We know from the definition of $\on{Ad}$, that it can be interpreted as a map $\on{Ad} : G \to GL(\mf g)$. The group $GL(\mf g)$ of invertible linear transformations of $\mf g$ is also a Lie group. If $\on{dim} \mf g = n$, then we can identify $GL(\mf g) \cong GL(\R^n)$ with invertible $n\x n$-matrices. Because invertible matrices form an open subset of all matrices, the tangent space $T_e GL(\R^n)$ at the identity is the space of all matrices. Thus the Lie algebra of $GL(\mf g)$ is $\mf{gl}(\mf g)$, the space of all linear transformations of $\mf g$. Hence the derivative of $\on{Ad}$ at $e \in G$ is a map
\[
\on{ad} := T_e \on{Ad} : \mf g \to \mf{gl}(\mf g), \qquad u \mapsto \on{ad}_u\;,
\]
and is called the \emph{adjoint representation} of $\mf g$. The map $\on{ad}$ figures in the following differentiation formula.
\begin{lemma}
\label{diff_Ad}
Let $t \mapsto g_t \in G$ be a smooth curve and $v \in \mf g$. Then
\[
\p_t \left(\on{Ad}_{g_t} v \right) = \on{ad}_{\p_t g_t g_t^{-1}} \on{Ad}_{g_t} v\;.
\]
\end{lemma}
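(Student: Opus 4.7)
The plan is to reduce the time derivative at an arbitrary $t$ to a derivative at the identity of $G$, where by definition $T_e \on{Ad} = \on{ad}$. The key algebraic tool is the homomorphism property $\on{Ad}_{gh} = \on{Ad}_g \on{Ad}_h$ established earlier in the excerpt.

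First I would split the curve $g_{t+s}$ using the identity $g_{t+s} = (g_{t+s} g_t^{-1}) g_t$, which by the homomorphism property gives
\[
\on{Ad}_{g_{t+s}} v = \on{Ad}_{g_{t+s} g_t^{-1}} \on{Ad}_{g_t} v.
\]
Now the vector $w := \on{Ad}_{g_t} v \in \mf g$ no longer depends on $s$, so differentiating in $s$ at $s=0$ only touches the factor $\on{Ad}_{g_{t+s} g_t^{-1}}$.

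Next I would observe that the curve $s \mapsto h_s := g_{t+s} g_t^{-1}$ satisfies $h_0 = e$ and $\p_s|_{s=0} h_s = (\p_t g_t) g_t^{-1}$; this is exactly the right-trivialized velocity of $g_t$ discussed earlier in the notes. Applying the chain rule and using the definition $\on{ad} := T_e \on{Ad}$, we get
\[
\p_s\big|_{s=0} \on{Ad}_{h_s} w = \on{ad}_{(\p_t g_t) g_t^{-1}} w.
\]
Substituting $w = \on{Ad}_{g_t} v$ and noting that $\p_t(\on{Ad}_{g_t} v) = \p_s|_{s=0} \on{Ad}_{g_{t+s}} v$ yields the claimed formula.

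There is no real obstacle here; the only subtlety is being careful that the derivative acts through $GL(\mf g)$ via the chain rule (which is legitimate because $\on{Ad}: G \to GL(\mf g)$ is smooth and $\on{ad}$ is by definition its tangent map at $e$).
\qed
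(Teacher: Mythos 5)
Your proposal is correct and follows essentially the same route as the paper: the paper also splits $\on{Ad}_{g_t} v = \on{Ad}_{g_t g_{t_0}^{-1}} \on{Ad}_{g_{t_0}} v$ via the homomorphism property and differentiates the first factor at the identity, where $\on{ad} = T_e \on{Ad}$. Your write-up just makes the chain-rule step and the right-trivialized velocity of the curve $s \mapsto g_{t+s} g_t^{-1}$ a bit more explicit.
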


\begin{proof}
We obtain this formula by writing
\begin{align*}
\p_t|_{t=t_0} \left(\on{Ad}_{g_t} v \right) 
&= \p_t|_{t=t_0} \left( \on{Ad}_{g_t g_{t_0}^{-1}} \on{Ad}_{g_{t_0}} v \right) \\
&= \on{ad}_{\p_t|_{t=t_0} g_t g_{t_0}^{-1}} \on{Ad}_{g_{t_0}} v\,.
\end{align*}
\qed
\end{proof}

However we will need the transposed version of it. For each $u \in \mf g$ fixed, the transpose $\on{ad}_u^\ast$ is defined by
\[
\left\langle \on{ad}^\ast_u \mu, v \right\rangle_{\mf g^\ast \x \mf g} = 
\left\langle \mu, \on{ad}_u v \right\rangle_{\mf g^\ast \x \mf g}\;,
\]
and thus $\on{ad}^\ast$ defines a map
\[
\on{ad}^\ast : \mf g \to \mf{gl}(\mf g^\ast)\;.
\]
This map is called the \emph{coadjoint representation} of $\mf g$. The transposed version of Lemma \ref{diff_Ad} is given in the following lemma.

\begin{lemma}
\label{diff_Adstar}
Let $t \mapsto g_t \in G$ be a smooth curve and $\mu \in \mf g^\ast$. Then
\[
\p_t \left(\on{Ad}^\ast_{g_t} \mu \right) = \on{Ad}_{g_t}^\ast \on{ad}^\ast_{\p_t g_t g_t^{-1}} \mu\;.
\]
\end{lemma}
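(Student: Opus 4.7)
The plan is to reduce Lemma \ref{diff_Adstar} to Lemma \ref{diff_Ad} via the duality pairing $\langle\cdot,\cdot\rangle_{\mf g^\ast \x \mf g}$. Since the two sides of the claimed identity are elements of $\mf g^\ast$, it suffices to show that they agree when paired with an arbitrary test vector $v \in \mf g$. The key tools are: (i) the defining relations $\langle \on{Ad}_g^\ast \mu, v \rangle = \langle \mu, \on{Ad}_g v \rangle$ and $\langle \on{ad}_u^\ast \mu, v \rangle = \langle \mu, \on{ad}_u v \rangle$, and (ii) the fact that $\mu$ and $v$ are time-independent, so $\partial_t$ can be passed directly through the pairing onto the $\on{Ad}_{g_t}v$ factor.

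Concretely, I would fix $v \in \mf g$ and compute
\[
\p_t \left\langle \on{Ad}^\ast_{g_t} \mu, v \right\rangle_{\mf g^\ast \x \mf g}
= \p_t \left\langle \mu, \on{Ad}_{g_t} v \right\rangle_{\mf g^\ast \x \mf g}
= \left\langle \mu, \p_t \on{Ad}_{g_t} v \right\rangle_{\mf g^\ast \x \mf g}.
\]
Now apply Lemma \ref{diff_Ad} to rewrite $\p_t \on{Ad}_{g_t} v = \on{ad}_{\p_t g_t g_t^{-1}} \on{Ad}_{g_t} v$, and then unwind using the definitions of $\on{ad}^\ast$ and $\on{Ad}^\ast$ to move these operators across the pairing back onto $\mu$. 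This yields $\left\langle \on{Ad}^\ast_{g_t} \on{ad}^\ast_{\p_t g_t g_t^{-1}} \mu, v \right\rangle$. Since $v$ was arbitrary, the desired identity in $\mf g^\ast$ follows.

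There is really no analytical obstacle here; everything is algebraic, and the smoothness assumption on $g_t$ together with the finite-dimensional setting of Sect.~\ref{sec_geometry} justifies passing the derivative inside the pairing. The only thing to be careful about is the order of operators in the final expression, which comes from the antihomomorphism property $\on{Ad}^\ast_g \on{Ad}^\ast_h = \on{Ad}^\ast_{hg}$ noted earlier in the text, and which is automatically correct when one follows the pairing calculation above step by step.
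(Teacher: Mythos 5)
Your proposal is correct and is essentially identical to the paper's proof: the paper also pairs $\on{Ad}^\ast_{g_t}\mu$ with an arbitrary $u\in\mf g$, passes $\p_t$ through the pairing, applies Lemma \ref{diff_Ad}, and then moves $\on{ad}^\ast$ and $\on{Ad}^\ast$ back across the pairing. No gaps; the step-by-step handling of the operator order at the end is exactly as the paper does it.
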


\begin{proof}
Take $u \in \mf g$ and consider
\begin{align*}
\p_t \langle \on{Ad}^\ast_{g_t} \mu, u \rangle 
&= \langle \mu, \p_t \on{Ad}_{g_t} u \rangle \\
&= \langle \mu, \on{ad}^\ast_{\p_t g_t g_t^{-1}} \on{Ad}_{g_t} u \rangle \\
&= \langle \on{Ad}^\ast_{g_t} \on{ad}^\ast_{\p_t g_t g_t^{-1}} \mu, u \rangle\,,
\end{align*}
from which the statement follows.
\qed
\end{proof}

\runinhead{The Euler-Poincar\'e Equation.}

Lemma \ref{diff_Adstar} allows us to express equation \eqref{ep_int} as,
\begin{align*}
0 = \p_t \left(\on{Ad}_{g_t}^\ast u_t^\flat \right)
&= \on{Ad}_{g_t}^\ast \p_t u_t^\ast + \on{Ad}_{g_t}^\ast \on{ad}^\ast_{\p_t g_t g_t^{-1}} u_t^\flat \\
&= \on{Ad}_{g_t}^\ast \left( \p_t u_t^\ast + \on{ad}^\ast_{\p_t g_t g_t^{-1}} u_t^\flat \right)\;,
\end{align*}
and because $\on{Ad}_{g_t}^\ast$ is invertible we obtain the equation
\[
\p_t u_t^\flat = -\on{ad}^\ast_{u_t} u_t^\flat
\;.\]
Let us state this result as a theorem.

\begin{theorem}\label{EP-def}
Let $t \mapsto u_t \in \mf g$ be a solution of the registration problem from Def. \ref{abstract_lddmm}. Then it satisfies the equation
\begin{equation}
\label{ep_eq}
\p_t u_t^\flat = -\on{ad}^\ast_{u_t} u_t^\flat
\;.
\end{equation}
This equation is called the \emph{Euler-Poincar\'e equation} on the Lie group $G$.
\end{theorem}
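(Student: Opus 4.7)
The plan is to combine the critical point condition derived from Theorem~\ref{lddmm_DE} with the equivariance of the momentum map to exhibit a quantity that is conserved along the optimal path, and then differentiate it to recover \eqref{ep_eq}.

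First, since $u_t$ minimizes $E$, it is a critical point, so $DE(u)(t) = 0$ for every $t \in [0,1]$. By \eqref{eq_DE} this is exactly \eqref{DE_crit}, $u_t^\flat = -\,g_t.I_0 \diamond g_tg_1^{-1}.\pi$, where $\pi = \tfrac{1}{\si^2}(g_1.I_0 - I_1)^\flat$ depends only on the endpoint data and is independent of $t$. I then apply the equivariance identity $g.I \diamond g.\pi = \on{Ad}^\ast_{g^{-1}}(I \diamond \pi)$ with $g = g_t$ to rewrite the right-hand side as $-\on{Ad}^\ast_{g_t^{-1}}\bigl(I_0 \diamond g_1^{-1}.\pi\bigr)$. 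Using $\on{Ad}^\ast_{g_t^{-1}} = (\on{Ad}^\ast_{g_t})^{-1}$ and applying $\on{Ad}^\ast_{g_t}$ to both sides produces \eqref{ep_int_diamond}, namely $\on{Ad}^\ast_{g_t} u_t^\flat = -\,I_0 \diamond g_1^{-1}.\pi$, whose right-hand side is manifestly independent of $t$. This is the conservation law that drives the proof.

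Differentiating in $t$ therefore gives $\partial_t\bigl(\on{Ad}^\ast_{g_t} u_t^\flat\bigr) = 0$. Applying Lemma~\ref{diff_Adstar} via the product rule, together with $\partial_t g_t\, g_t^{-1} = u_t$ (the definition of the right-trivialized velocity), yields $\on{Ad}^\ast_{g_t}\!\bigl(\partial_t u_t^\flat + \on{ad}^\ast_{u_t} u_t^\flat\bigr) = 0$. Since $\on{Ad}^\ast_{g_t}$ is an invertible linear map on $\mf g^\ast$ with inverse $\on{Ad}^\ast_{g_t^{-1}}$, the bracketed factor must vanish, which is exactly \eqref{ep_eq}.

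The argument is essentially bookkeeping once the conserved quantity $\on{Ad}^\ast_{g_t} u_t^\flat$ has been identified, so the principal danger is mechanical: keeping signs, transposes, and the left-versus-right conventions consistent when pushing $\on{Ad}^\ast_{g_t}$ across equalities, and verifying that differentiating $\on{Ad}^\ast_{g_t} u_t^\flat$ really produces both the $\partial_t u_t^\flat$ term and the $\on{ad}^\ast_{u_t} u_t^\flat$ term with a common $\on{Ad}^\ast_{g_t}$ prefactor. Conceptually the crux is the recognition that at a critical point the momentum $u_t^\flat$ is coadjoint-transported along the group path $g_t$; the Euler-Poincar\'e equation is then simply the infinitesimal form of that conservation law, in the spirit of Noether's theorem.
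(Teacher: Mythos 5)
Your proof is correct and follows essentially the same route as the paper: you use the critical-point identity \eqref{DE_crit}, the equivariance of the momentum map to obtain the time-independent quantity $\on{Ad}^\ast_{g_t} u_t^\flat$, and then Lemma~\ref{diff_Adstar} together with $\p_t g_t\, g_t^{-1} = u_t$ and invertibility of $\on{Ad}^\ast_{g_t}$ to conclude \eqref{ep_eq}. (Your version even keeps the sign in $\on{Ad}^\ast_{g_t} u_t^\flat = -\,I_0 \diamond g_1^{-1}.\pi$ consistently, which the paper's display \eqref{ep_int_diamond} drops; this is immaterial since only the time-independence of the right-hand side is used.)
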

\begin{remark}
The Euler-Poincar\'e equation is an evolution equation on the dual $\mf g^*$ of the Lie algebra $\mf g$, independent of $I_0, I_1$. Discussion of the history and some applications of the Euler-Poincar\'e equation can be found in \cite{Holm1998, Marsden1999}.
\end{remark}

Now let us discuss the interplay between the group of transformations and the objects that are being matched. Let $t \mapsto u_t$ be a solution of the matching problem. Then $u_t$ satisfies the Euler-Poincar\'e equation, which depends only on the geometry of the group, as encoded by $\on{ad}^\ast$, and on the chosen metric $\langle.,,\rangle_{\mf g}$ via the $\flat$-operator. The Euler-Poincar\'e equation does not see the space of objects, the action of the transformation group thereon or the particular objects $I_0, I_1$, we are trying to match. How is this possible? In order to compute $u_t$ via the Euler-Poincar\'e equation we need to supply initial conditions and these do depend $I_0, I_1$, the group action, and the inner product $\langle.,.\rangle_V$ we chose on $V$. From \eqref{DE_crit} we see that
\begin{equation}
\label{DE_crit_0}
u_0^\flat = -I_0 \diamond g_1^{-1}.\pi\;,
\end{equation}
with $\pi = \frac{1}{\si^2}(g_1.I_0 - I_1)^\flat$. So the initial value $u_0^\flat$ depends on the given objects $I_0, I_1$, on the inner product $\langle.,.\rangle_V$ via the $\flat$-map and on the group action via the momentum map.

The momentum map has yet another role to play. It allows us to reduce the dimensionality of the matching problem. Let us assume that both $G$ and $V$ are finite-dimensional. If $\on{dim} G$ is much bigger than $\on{dim} V$, then there must be a redundancy in the action of $G$ on $V$. The momentum map $\diamond : V \x V^\ast \to \mf g^\ast$ tells us that the initial condition $u_0^\flat$ will lie in the space $\on{Im}(I_0 \diamond .)$, whose dimension is at most $\on{dim} V$. Even more, we see from \eqref{DE_crit} that for each time $t$ we have $u_t^\flat \in \on{Im} \left(g_t.I_0 \diamond .\right)$. The same thing happens for infinite dimensional spaces, as we will see in the case of image matching.

\runinhead{The EPDiff Equation.} To write the Euler-Poincar\'e equation on the diffeomorphism group we first need to calculate the operators $\on{Ad}$, $\on{ad}$ and $\on{ad}^\ast$. Differentiating the conjugation $\on{conj}_\ph(\ps) = \ph\circ\ps\circ\ph^{-1}$ gives
\[
\on{Ad}_\ph u = T_{\on{Id}}(\on{conj}_\ph).u = \left(D\ph.u\right)\circ\ph^{-1}\;,
\]
with $\ph \in \on{Diff}(\R^3)$ and $u \in \mc X(\R^3)$. Now we differentiate once more, which leads to
\[
\on{ad}_u v = T_{\on{Id}}\left(\ph \mapsto \on{Ad}_{\ph} v\right).u = Du.v - Dv.u = -[u,v]\,;
\]
where $[u,v]$ is the commutator bracket of vector fields. Next we need the coadjoint action $\on{ad}^\ast$. To compute it, we take $m \in \mf X(\R^3)^\ast$ and pair it with $\on{ad}_u v$ as in \cite{Holm1998},
\begin{align*}
\langle m, \on{ad}_u v \rangle_{L^2} 
&= \int_{\R^3} m \cdot \left(Du.v - Dv.u\right) \ud x\\
&= \int_{\R^3} m^k \p_i u^k v^i - m^k \p_i v^k u^i \ud x \\
&= \int_{\R^3} m^i \p_k u^i v^k + \p_i(m^k u^i) v^k \ud x \\
&= \langle Du^T.m + Dm.u + m \on{div} u, v \rangle_{L^2}\,.
\end{align*}
We can thus write the \emph{Euler-Poincar\'e equation on the diffeomorphism group}, also called \emph{EPDiff}. It has the form
\begin{equation}
\label{eq_epdiff}
\p_t m + Dm.u + Du^T.m + \on{div}(u)m = 0\;,\qquad m = u^\flat = Lu\;.
\end{equation}
The EPDiff equation \eqref{eq_epdiff} first appeared in the context of unidirectional propagation of shallow water waves \cite{Holm1993}. In the context of planar image registration, the crests of the shallow water waves correspond to the contour lines of the image \cite{HoRaTrYo2004}.
To improve readability in \eqref{eq_epdiff}, we have omitted the subscript $t$ for the time-dependence. For the sake of completeness, we also include the coadjoint action,
\[
\on{Ad}_\ph^\ast m = \left(\det D\ph \right) D\ph^T.(m\circ\ph)\;.
\]

\runinhead{Momentum Map for Image Matching.}
The momentum map for the action of $\on{Diff}(\R^3)$ on the space $\mc F(\R^3)$ of images is $I \diamond \pi = -\pi \nabla I$. Thus \eqref{DE_crit_0} tells us that the initial momentum is of the form
\begin{equation}
\label{DE_crit_0_im}
L u_0 = \ph_1^{-1}.\pi \nabla I_0\;.
\end{equation}
As $I_0$ is fixed this means that we only have to look for the initial momenta in the subspace
\[
\on{Im} \left(I_0 \diamond .\right) = \left\{ P\nabla I_0 \,:\, P \in \mc F(\R^3) \right\}\;,
\]
elements of which are specified using only one real-valued function $P$, while the vector field $u_0$ or equivalently the momentum $Lu_0$ needs 3 functions. This reduction strategy was employed in \cite{Miller2006, Vialard2012} to solve the matching problem by estimating the initial momentum and using the EPDiff equation to reconstruct the path.

The momentum map also allows for an intuitive interpretation. Equation \eqref{DE_crit_0_im} tells us that the optimal momentum will point in the direction of the gradient of $I_0$, that is $Lu_0$ will be orthogonal to the contour lines of $I_0$. Indeed we see from
\[
L u_t = \ph_{t,1}.\pi \nabla \left(\ph_t.I_0\right)\;,
\]
that for all times the momentum is orthogonal to the contour lines of the image $\ph_t.I_0$ at time $t$. A vector field that is parallel to the contour lines will leave the image constant and since we are interested in deforming the images with the least amount of energy it is natural that the momentum wants to be orthogonal the contour lines.

\runinhead{Evolution Equations on $T^\ast V$.}
We have seen that the solution $u_t$ of the matching problem from Def. \ref{abstract_lddmm} can be expressed via the momentum map
\[
u_t^\flat = -g_t I_0 \diamond g_{t,1}.\pi\;,
\]
that satisfies the Euler-Poincar\'e evolution equation on $\mf g^\ast$:
\begin{equation}
\label{ep_eq_2}
\p_t u_t^\flat = -\on{ad}^\ast_{u_t} u_t^\flat\;.
\end{equation}
The momentum map representation can now be used to reduce the dimensionality of the problem by writing the evolution equation \eqref{ep_eq_2} directly on $T^\ast V$. Let us define the variables
\[
I_t := g_t.I_0,\qquad P_t := g_{t,1}.\pi\;.
\]
Geometrically we have $I_t \in V$ and $P_t \in T^\ast_{I_t} V \cong V^\ast$ so that the pair $(I_t, P_t)$ describes an element of $T^\ast V$. Computing the time-derivative of $I_t$ gives
\[
\p_t I_t = \p_t \left( g_t.I_0 \right) = \left( \p_t g_t \right).I_0
= u_t g_t.I_0 = u_t.I_t = \ze_{u_t}(I_t)\;.
\]
To simplify the derivation of the evolution equation for $P_t$ we will assume that the action of $G$ on $V$ is linear, as in the case of image matching. In that case the lifted actions of $G$ on $TV$ and $T^\ast V$ do not depend on the basepoint. Take $U \in V \cong T_{I_t} V$ and consider
\begin{align*}
\p_t \left\langle P_t, U \right\rangle_{V^\ast\x V} 
&= \p_t \left\langle g_{t,1}.\pi, U \right\rangle_{V^\ast\x V} \\
&= \p_t \left\langle g_1^{-1}.\pi, g_t^{-1}.U \right\rangle_{V^\ast\x V} \\
&= \left\langle g_1^{-1}.\pi, -g_t^{-1} \left(\p_t g_t \right) g_t^{-1} .U \right\rangle_{V^\ast\x V} \\
&= -\left\langle P_t, u_t.U \right\rangle_{V^\ast\x V} \\
&= \left\langle -u_t^T.P_t, U \right\rangle_{V^\ast\x V}\;.
\end{align*}
The geometrically correct expression, which holds for a general $G$-action, not just a linear one, is
\[
\p_t P_t = -T_{I_t}^\ast \ze_{u_t}.P_t\;.
\]
In case of a linear action the fundamental vector field $\ze_{u_t}$ is linear and thus we can omit the derivative and write simply $u_t^T$ for the transpose map $T_{I_t}^\ast \ze_{u_t}$ in the last line of the calculation above. Thus we obtain the following system of evolution equations on $T^\ast V$,
\begin{equation}
\label{ev_eq}
\begin{split}
\p_t I_t &= \ze_{u_t}(I_t) \\
\p_t P_t &= -T_{I_t}^\ast \ze_{u_t}.P_t \\
u_t^\flat &= I_t \diamond P_t\;.
\end{split}
\end{equation}
Note that, while we cannot completely avoid computing the vector field $u_t$, it only needs to be updated at each time step using the the variables $(I_t, P_t)$ on $T^\ast V$. 

\runinhead{Evolution Equations for Image Matching.}
Let us write out the evolution equations in the case of image matching. The action is linear and the fundamental vector fields are given by
\[
\ze_{u}(I) = -\nabla I \cdot u\;.
\]
Now we compute the transpose
\begin{align*}
\left\langle P, \ze_u(I) \right\rangle_{\mc F(\R^3)^\ast \x \mc F(\R^3)}
&= -\int_{\R^3} P(x) \nabla I(x) \cdot u(x) \ud x \\
&= \int_{\R^3} \on{div}(P u)(x) I(x) \ud x \\
&= \left\langle \on{div}(Pu), I \right\rangle_{\mc F(\R^3)^\ast \x \mc F(\R^3)}\;.
\end{align*}
Thus the evolution equations have the form
\begin{align}
\begin{split}
\p_t I_t + \nabla I_t \cdot u_t &= 0 \\
\p_t P_t + \on{div}(P_t u_t) &= 0 \\
Lu_t &= -P_t \nabla I_t\;.
\end{split}
\label{evol-eqns}
\end{align}
See also \cite{Younes2009} for a direct derivation and \cite{HoMa2004} for an explanation and classification of the cotangent lift momentum maps associated with EPDiff.

\runinhead{Matching via Initial Momentum.}
The evolution equations in \eqref{evol-eqns} allow for a reformulation of the matching problem from Def.  \ref{abstract_lddmm}. Instead of searching for paths $t \mapsto u_t \in \mf g$, we see that
any solution of the registration problem is completely determined by the initial momentum $P_0 = g_1^{-1}.\pi$. Thus we can formulate the following equivalent matching problem.

\begin{definition}[Registration Problem via Initial Momentum]
\label{abstract_initial}
Given $I_0, I_{\mathrm T} \in V$ find $P_0 \in V^\ast \cong T_{I_0}^\ast V$ which minimizes
\[
E(P_0) = \frac 12 \left| I_0 \diamond P_0 \right|_{\mf g}^2 + \frac 1{2\si^2} 
\left| I_1 - I_{\mathrm T} \right|_V^2\;,
\]
where $I_1$ is defined as the solution of
\begin{align*}
\p_t I_t &= \ze_{u_t}(I_t) \\
\p_t P_t &= -T_{I_t}^\ast \ze_{u_t}.P_t \\
u_t^\flat &= I_t \diamond P_t\;.
\end{align*}
\end{definition}

\begin{remark}
\label{rem_const_norm}
We replaced in the Def. \ref{abstract_initial} of the registration problem the integral $\int_0^1 \left| u_t \right|_{\mf g}^2 \ud t$ over the whole time interval by $\left| u_0  \right|_{\mf g}^2 = \left| I_0 \diamond P_0 \right|_{\mf g}^2$. This is justified, because if $t \mapsto u_t \in \mf g$ is a solution of the registration problem from Def. \ref{abstract_lddmm}, then its norm $\left| u_t \right|_{\mf g}^2$ is constant in time. It is possible to prove this result directly, by using the evolution equations for $(I_t, P_t)$ as follows,
\begin{align*}
\p_t|_{t=t_0} \left( \frac 12 \left| u_t \right|_{\mf g}^2\right)
&= \left\langle \p_t|_{t=t_0} \left(I_t \diamond P_t\right), u_{t_0} \right\rangle_{\mf g^\ast \x \mf g} \\
&= \p_t|_{t=t_0} \left\langle  P_t, \ze_{u_{t_0}}(I_t) \right\rangle_{\mf g^\ast \x \mf g} \\
&= -\left\langle  T^\ast_{I_{t_0}} \ze_{u_{t_0}}.P_{t_0}, \ze_{u_{t_0}}(I_{t_0}) \right\rangle_{\mf g^\ast \x \mf g}
+ \left\langle  P_{t_0}, T_{I_{t_0}} \ze_{u_{t_0}}. \ze_{u_{t_0}}(I_{t_0}) \right\rangle_{\mf g^\ast \x \mf g} \\
&= 0\;.
\end{align*}
In order to find minima for the registration problem from Def. \ref{abstract_initial}, we would need to compute the derivative of the energy $E(P_0)$ with respect to $P_0$, which would require us to differentiate the solution $I_1$ with respect to $P_0$. This can be done using a technique called \emph{adjoint equations} and is slightly more involved than the computation of the derivative in Thm. \eqref{abs_en}. Further details as well as a discussion of the numerical discretization can be found in \cite{Vialard2012}.
\end{remark}

\runinhead{Interpretation via Riemannian Geometry}
Many of the derivations, theorems and properties discussed in this section are familiar from Riemannian geometry. Let us start with the Euler-Poincar\'e equation and discuss why it arises. The registration problem in Def. \ref{abstract_lddmm} asks us to find curves $t \mapsto u_t \in \mf g$, that are minima of
\begin{equation}
\label{rm_E1}
E(u) = \frac 12 \int_0^1 \left| u_t \right|_{\mf g}^2 \ud t + \frac{1}{2\si^2} \left| g_1.I_0 - I_1\right|^2_V\;.
\end{equation}
How does Riemannian geometry arise here? A Riemannian metric $\ga$ on a manifold is an inner product on each tangent space that varies smoothly with the basepoint. On the group $G$ we have an inner product $\langle .,.\rangle_{\mf g}$ on $\mf g = T_e G$ and we can use right-multiplication to define the following Riemannian metric on the whole group,
\begin{equation}
\label{extend_met}
\ga_g(X_g, Y_g) := \left\langle X_g g^{-1}, Y_g g^{-1} \right\rangle_{\mf g},\qquad X_g, Y_g \in T_g G\;.
\end{equation}
Let $t\mapsto u_t \in \mf g$ be a curve and $t \mapsto g_t \in G$ be its flow, i.e. $\p_t g_t = u_t g_t$, $g_0 = e$. Then \eqref{rm_E1} is equivalent to
\[
E(g) = \frac 12 \int_0^1 \ga_{g_t}\left(\p_t g_t, \p_t g_t\right) \ud t + \frac{1}{2\si^2} \left| g_1.I_0 - I_1\right|^2_V\;,
\]
where we look for the minimum over all curves $t \mapsto g_t \in G$ with $g_0 =e$. Let $t \mapsto \wt g_t$ be a minimum. Then this curve also must be a minimum of
\[
E_{\mathrm{KE}}(g) = \frac 12 \int_0^1 \ga_{g_t}\left(\p_t g_t, \p_t g_t\right) \ud t\;,
\]
over the set $\left\{t \mapsto g_t \,:\, g_0=e,\; g_1=\wt g_1\right\}$ of all curves with fixed endpoints. This is exactly the definition of a geodesic in Riemannian geometry. That is, the Euler-Poincar\'e equation in the general form
\[
\p_t u_t^\flat = -\on{ad}^\ast_{u_t} u_t^\flat\;,
\]
is the geodesic equation for right-invariant metrics on Lie groups. The property used in Rem. \ref{rem_const_norm}, that the norm  $t \mapsto \left|u_t \right|_{\mf g}^2$ is constant is also a general result for geodesics in Riemannian geometry. It can be shown using the Euler-Poincar\'e equation in the following way,
\begin{align*}
\p_t \left( \frac 12 \left| u_t \right|_{\mf g}^2\right)
&= \left\langle \p_t u_t^\flat, u_t \right\rangle_{\mf g^\ast \x \mf g} 
= \left\langle -\on{ad}^\ast_{u_t} u_t^\flat, u_t \right\rangle_{\mf g^\ast \x \mf g}
= -\left\langle  u_t^\flat, \on{ad}_{u_t} u_t \right\rangle_{\mf g^\ast \x \mf g} \;.
\end{align*}
Now we use the property that $\on{ad}_u v$ is antisymmetric, i.e. $\on{ad}_u v = -\on{ad}_v u$, which implies $\on{ad}_{u_t}u_t = 0$, and conclude that $\left| u_t \right|_{\mf g}^2$ is constant in time.

\runinhead{Riemannian Geometry on $V$.}

Let us consider the left action $\ell: G \x V \to V$ of $G$ on $V$. Assume for now that the action is \emph{transitive}, i.e. for any two $I, J \in V$ there exists $g \in G$ such that $g.I = J$. Equivalently this means that for any $I \in V$ the map $\ell^I : G \to V$ is onto. If the action is not onto, we can restrict ourselves to an orbit $G.I = \left\{ g.I \,:\, g \in G \right\}$ and proceed as below.

We have an inner product $\langle .,.\rangle_{\mf g}$ on the Lie algebra, which we can extend to a right-invariant Riemannian metric $\ga^G$ on the whole group $G$ via \eqref{extend_met}. We want to project this metric to a Riemannian metric $\ga^V$ on $V$. Fix $I_0 \in V$ and let $J \in V$ be any element. Then we can write $J = g.I_0$ for some $g \in G$, not necessarily unique, due to the transitivity of the action. If $U \in T_JV$ is a tangent vector, we can write it in the form $U = X_g.I_0 = T_g\ell^{I_0}.X_g$ with some $X_g \in T_g G$ and again $X_g$ is not necessarily unique.

\begin{theorem}
The expression
\begin{equation}
\label{met_V}
\ga^V_J(U, U) = \inf_{U = X_g.I_0} \ga^G_g(X_g, X_g)\;,
\end{equation}
defines a well-defined Riemannian metric on $V$ that is independent of the choice of $I_0$.
\end{theorem}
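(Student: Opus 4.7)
My approach is to eliminate the template $I_0$ from \eqref{met_V} by right-trivialization, which reduces the theorem to a few essentially routine verifications. If $J = g.I_0$ and $X_g \in T_gG$ satisfies $X_g.I_0 = U$, set $w := X_g g^{-1} \in \mf g$. Then $\ga^G_g(X_g,X_g) = \langle w,w\rangle_{\mf g}$ by \eqref{extend_met}, and writing $X_g$ as the velocity at $0$ of the curve $t\mapsto \exp(tw)g$ and using associativity of the action gives
\[
T_g\ell^{I_0}(X_g) = \tfrac{d}{dt}\Big|_{0}\exp(tw).(g.I_0) = \ze_w(J).
\]
Hence \eqref{met_V} is equivalent to
\[
\ga^V_J(U,U) = \inf\bigl\{\langle w,w\rangle_{\mf g} : w\in\mf g,\ \ze_w(J) = U\bigr\},
\]
a formula in which $I_0$ no longer appears. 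Both well-definedness with respect to the choice of representative $(g,X_g)$ and independence from the basepoint $I_0$ reduce immediately to this reformulation.

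Next I would analyze the admissible set $S_U := \{w : \ze_w(J) = U\}$. Since the action is transitive, the infinitesimal action $w\mapsto \ze_w(J):\mf g\to T_JV$ is surjective, so $S_U$ is a nonempty affine subspace parallel to the isotropy Lie subalgebra $\mf h_J := \ker \ze_\cdot(J)$. With respect to the orthogonal decomposition $\mf g = \mf h_J^\perp \oplus \mf h_J$, every $w\in S_U$ splits as $w = w_\perp + w_\parallel$ with $w_\perp$ the same for all $w\in S_U$, and $|w|_{\mf g}^2 = |w_\perp|_{\mf g}^2 + |w_\parallel|_{\mf g}^2$. The infimum is therefore attained uniquely at the \emph{horizontal lift}
\[
\on{Hor}_J(U) := w_\perp \in \mf h_J^\perp,
\]
which is the inverse of the linear isomorphism $\ze_\cdot(J): \mf h_J^\perp \to T_JV$.

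This immediately supplies the inner-product structure: the polarized form
\[
\ga^V_J(U_1,U_2) := \langle \on{Hor}_J(U_1), \on{Hor}_J(U_2)\rangle_{\mf g}
\]
is bilinear and symmetric, and it is positive definite because $\on{Hor}_J$ is injective and $\langle\cdot,\cdot\rangle_{\mf g}$ is a genuine inner product; taking $U_1 = U_2 = U$ recovers \eqref{met_V}.

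For smoothness I would show that $J\mapsto \mf h_J$ is a smooth family of subspaces of constant dimension, from which it follows that $J\mapsto \mf h_J^\perp$, and therefore $\on{Hor}$ and $\ga^V$, depend smoothly on $J$. Constant dimension uses the equivariance identity $g.\ze_u(g^{-1}.I) = \ze_{\on{Ad}_g u}(I)$ proved earlier, which shows that the stabilizers at points of the same $G$-orbit are $\on{Ad}$-conjugate. The main obstacle I anticipate is precisely this smoothness of the horizontal distribution: in the finite-dimensional transitive setting of the statement it follows from the constant-rank theorem applied to the submersion $\ell^{I_0}:G\to G.I_0$, but in the infinite-dimensional image-matching applications it would require the analytical care postponed to Sect.~\ref{sec_analysis}.
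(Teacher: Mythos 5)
Your proposal is correct and its core step is exactly the paper's proof: right-trivializing $X_g$ to reduce \eqref{met_V} to $\inf_{U=\ze_X(J)}\langle X,X\rangle_{\mf g}$, which at once shows independence of the representative $g$ and of $I_0$. The additional verifications you supply (attainment of the infimum at the horizontal lift, polarization, positive definiteness, smoothness) go beyond what the paper's short proof checks, but they coincide with the paper's own discussion of $\on{Ver}(J)$, $\on{Hor}(J)$ and the momentum map immediately following the theorem.
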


\begin{proof}
Two things need to be proven. First, the expression on the right side must not depend on $g$ and second we have to show that $\ga^V$ is independent of $I_0$. As a first step we note that any $X_g \in T_g G$ is of the form $Xg$ with $X \in \mf g$ and thus we can rewrite the condition in the infimum of \eqref{met_V} as
\[
U = X_g.I_0 = Xg.I_0 = \ze_X(J)\;,
\]
as well as
\[
\ga^G_g(X_g, X_g) = \ga^G_g(Xg, Xg) = \left\langle X, X \right\rangle_{\mf g}\;,
\]
and hence
\[
\ga^V_J(U,U) = \inf_{U = X_g.I_0} \ga^G_g(X_g, X_g) = 
\inf_{U = \ze_X(J)} \left\langle X, X \right\rangle_{\mf g}\;.
\]
This shows that the metric $\ga^V$ is independent of both the group element $g$ used to represent $J$ as well as the choice of $I_0$ and thus everything is proven. \qed
\end{proof}

Associated to the map $\ell^{I_0} : G \to V$ is a splitting of the Lie algebra $\mf g$ into two orthogonal subspaces. Denote by $\on{Ver}(g) = \left(\on{ker} T_g \ell^{I_0}\right)g^{-1} \subseteq \mf g$ the vertical subspace. In fact $\on{Ver}(g)$ depends only on the element $J = g.I_0$ and can be described by
\[
\on{Ver}(J) = \left\{ X \in \mf g \,:\, \ze_X(J)=0 \right\}\;.
\]
The orthogonal complement of $\on{Ver}(J)$ with respect to the inner product $\langle.,.\rangle_{\mf g}$ is called the \emph{horizontal subspace},
\[
\on{Hor}(J) = \on{Ver}(J)^\perp\;.
\]

For each $J \in V$ the momentum map gives an identification between $T_I V$ and $\on{Hor}(J)$ via
\[
T_I U \ni U \mapsto (I \diamond U^\flat)^\sharp \in \on{Hor}(J)\;,
\]
where $\sharp:\mf g^\ast \to \mf g$ denotes the inverse of the $\flat$-map. To see that $(J \diamond U^\flat)^\sharp \in \on{Hor}(J)$ take any $X \in \on{Ver}(J)$ and look at
\[
\langle J \diamond U^\flat, X \rangle_{\mf g^\ast \x \mf g} = \langle U^\flat, \ze_X(J) \rangle_{V^\ast \x V} = 0\;.
\]
Surjectivity follows in finite dimensions via dimension counting and is a more delicate matter in infinite dimensions. The momentum map has the following property: for each $U \in T_J V$ the element $(J \diamond U^\flat)^\sharp \in \mf g$ realizes the infimum in \eqref{met_V}; i.e.,
\[
\ga^J(U,U) = \langle (J \diamond U^\flat)^\sharp, (J \diamond U^\flat)^\sharp \rangle_{\mf g}\;.
\]

The Riemannian interpretation of the matching problem may now be given, as follows: A solution $t \mapsto u_t$ or $t \mapsto g_t$ of the registration problem is a solution of the Euler-Poincar\'e equation \eqref{ep_eq} and thus a geodesic on the group $G$ with respect to the metric $\ga^G$. Furthermore the velocity at all times satisfies $u_t \in \on{Hor}(g_t.I_0)$. Such geodesics are called \emph{horizontal geodesics}. It follows from Riemannian geometry that the projected curve $I_t = g_t.I_0$ is a geodesic with respect to the Riemannian metric $\ga^V$. The set of evolution equations \eqref{ev_eq} are the geodesic equations on $V$ with respect to the metric $\ga^V$, written in the Hamiltonian form \cite{Vialard2012}.

Let us come back to \eqref{imreg_dist} from the introduction, which described registration as the minimization of
\[
E(g) = d_1(e, g)^2 + \tfrac 1{\si^2} d_2(g.I_0, I_1)^2\;,
\]
where $d_1(.,.)$ is a distance function on $G$ and $d_2(.,.)$ a distance function on $V$. The LDDMM framework chose $d_1(.,.)$ to be the geodesic distance with respect to the metric $\ga^G$. What the above discussion shows is that we can replace it with $d^V(.,.)$, the geodesic distance with respect to $\ga^V$; i.e. we can minimize
\[
E(J) = d^V(I_0, J)^2 + \tfrac 1{\si^2} d_2(J, I_1)^2\;,
\]
with $d_2(.,.)$ being some other metric on $V$.

For further details on the background from Riemannian geometry and the theory of group actions consult \cite{Michor2008b}. The Hamiltonian approach to Riemannian geometry, including the infinite dimensional case is described in \cite{Michor2007}. Riemannian metrics induced by group actions, especially the diffeomorphism group, in the context of shape matching are discussed in \cite{Micheli2013} and \cite{Bauer2013b_preprint}.

\section{Existence of Solutions for Image Registration}
\label{sec_analysis}

In this section we want to present a framework that allows us to prove the existence of minimizers for the image registration problem, that is for the energy
\[
E(u) = \frac 12 \int_0^1 |u_t|^2_L \ud t + \frac 1{2\si^2} \| I_0 \circ \ph^{-1} - I_1 \|^2_{L^2}\;,
\]
where $I_0, I_1 : \R^3 \to \R$ are grey-value images, $u : [0,1] \to \mf X(\R^3)$ a time-dependent vector field and $\ph_1$ its flow at time 1.

There are two competing tendencies in the mathematical modelling for image registration. We want the diffeomorphism group to be an (infinite-dimensional) Lie group. That is, we want the group operations to be smooth, so that we can rigorously apply the geometric framework of Sect. \ref{sec_geometry}. In addition, we want the Lie algebra of the diffeomorphism group with the norm $\langle.,.\rangle_L$ to be a Hilbert space, so that we can use completeness to show the existence of minimizers. Unfortunately the following theorem by Omori \cite{Omori1978} shows that these two requirements are incompatible. 

\begin{theorem}[Omori, 1978]
If a connected Banach-Lie group $G$ acts effectively, transitively and smoothly on a compact manifold, then $G$ must be a finite dimensional Lie group.
\end{theorem}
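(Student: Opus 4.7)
The plan is to translate the hypothesis ``effective transitive smooth action'' into an injective Lie algebra homomorphism $\mf g \hookrightarrow \mf X(M)$ and then use transitivity plus compactness of $M$ to force the image to lie in a finite-dimensional subspace of $\mf X(M)$.

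First I would fix a basepoint $x_0 \in M$ and study the orbit map $\rho : G \to M$, $g \mapsto g \cdot x_0$. Its derivative at $e$ is the infinitesimal action $v \mapsto \ze_v(x_0) \in T_{x_0}M$ (in the notation of \eqref{fund_vec}), which is surjective by transitivity. Since $G$ is a Banach-Lie group and $T_{x_0}M$ is a finite-dimensional Banach space, the Banach implicit function theorem applied to $\rho$ produces the isotropy subgroup $H := \rho^{-1}(x_0)$ as a closed Banach-Lie subgroup, whose Lie algebra $\mf h := \ker(d\rho_e)$ is closed and has finite codimension equal to $\dim M$ in $\mf g$. Moreover $\rho$ descends to a $G$-equivariant diffeomorphism $G/H \cong M$.

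Second, I would re-express effectiveness on the Lie algebra side. For an arbitrary $x = g \cdot x_0$ the stabiliser Lie algebra is $\mf h_x = \on{Ad}_g(\mf h)$, still of codimension $\dim M$. Because $G$ is connected, effectiveness of the $G$-action is equivalent to injectivity of the induced Lie algebra homomorphism $\ze : \mf g \to \mf X(M)$, which in turn is equivalent to
\[
\bigcap_{g \in G} \on{Ad}_g(\mf h) \;=\; \{0\}.
\]
If I can now produce a \emph{finite} collection $g_1,\dots,g_N \in G$ with $\bigcap_{i=1}^N \on{Ad}_{g_i}(\mf h) = \{0\}$, then the linear map
\[
v \;\longmapsto\; \bigl(\ze_v(g_1.x_0), \dots, \ze_v(g_N.x_0)\bigr) \;\in\; \bigoplus_{i=1}^N T_{g_i.x_0}M
\]
is injective, forcing $\dim \mf g \le N \dim M < \infty$ and completing the proof.

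The hard part, and the real content of the theorem, is producing such a finite reduction. In a general Banach space a decreasing family of closed finite-codimension subspaces with trivial total intersection need \emph{not} stabilise after finitely many steps, so the mere fact that the intersection over all of $G$ is zero is not enough. My attempt would be to introduce the intrinsic descending filtration of closed subalgebras $\mf h^{(0)} := \mf h$ and $\mf h^{(k+1)} := \{v \in \mf h^{(k)} : [\mf g, v] \subseteq \mf h^{(k)}\}$, noting that each $\mf h^{(k)}$ is closed and of finite codimension (by induction, since $\mf g/\mf h$ is finite-dimensional), and that $\bigcap_k \mf h^{(k)}$ is the largest $\on{Ad}$-invariant subspace contained in $\mf h$, hence equals $\{0\}$ by effectiveness. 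The local convergence of the Baker--Campbell--Hausdorff series available in the Banach-Lie setting would be used to relate the filtration $\{\mf h^{(k)}\}$ to intersections $\bigcap_i \on{Ad}_{g_i}(\mf h)$ for group elements $g_i$ close to $e$. Finally, compactness of $G/H \cong M$ would be invoked to cover the homogeneous space by finitely many such BCH charts and thereby certify termination of the intersection process in finitely many steps. This compactness-driven termination argument, which essentially says that the ``smoothness defect'' of a Banach-Lie action on a compact manifold cannot be absorbed by infinite-dimensional tails, is where I expect the main obstacle to lie and where the Banach (as opposed to, say, Fr\'echet) hypothesis on $G$ is indispensable.
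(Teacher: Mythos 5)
The paper does not actually prove this theorem; it is quoted and attributed to Omori's 1978 paper, so there is no in-text argument to compare yours with. Judged on its own terms, your proposal is sound in its preliminary reductions but has a genuine gap exactly where you suspect it. The setup is fine: the orbit map is a submersion (constant rank plus a Baire/Sard argument, using that a closed finite-codimension kernel is complemented), $\mf h$ is a closed subalgebra of finite codimension, effectiveness of the connected group gives $\bigcap_{g\in G}\on{Ad}_g(\mf h)=\{0\}$, the filtration $\mf h^{(k)}$ consists of closed subalgebras of finite codimension (since $\mf h^{(k)}/\mf h^{(k+1)}$ injects into the finite-dimensional space of linear maps $\mf g/\mf h^{(k)}\to\mf g/\mf h^{(k)}$), and $\bigcap_k \mf h^{(k)}$ is the largest ideal of $\mf g$ contained in $\mf h$, hence zero. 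But none of this bounds $\dim\mf g$: as you yourself observe, a decreasing chain of closed finite-codimension subspaces with trivial intersection need not stabilise, and everything up to this point uses only that $\mf h$ is a closed finite-codimension subalgebra containing no nonzero ideal. That situation occurs verbatim for the (non-Banach) Lie algebra of all smooth vector fields on a compact manifold acting on the manifold itself, which is effective, transitive and infinite-dimensional; so an argument of this purely algebraic shape cannot close, and the Banach structure must enter through hard analysis, not bookkeeping.

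The proposed mechanism for termination does not supply it. The filtration $\{\mf h^{(k)}\}$ is defined at a single point from Lie-algebraic data alone, so compactness of $M\cong G/H$ has no obvious way to act on it; covering $M$ by finitely many BCH charts gives finitely many group elements, but an infinite intersection $\bigcap_{g}\on{Ad}_g(\mf h)=\{0\}$ never by itself yields a finite subfamily with trivial intersection, and the claimed link between $\mf h^{(k)}$ and $\bigcap_i\on{Ad}_{g_i}(\mf h)$ for $g_i$ near $e$ is again an infinite-intersection statement. What is missing is precisely the analytic core of Omori's proof: using completeness, the continuity estimate $\lVert[u,v]\rVert\le C\lVert u\rVert\,\lVert v\rVert$, the fact that $\exp$ is a local diffeomorphism and Banach--Lie groups have no small subgroups, one shows the isotropy filtration must become stationary after finitely many steps, i.e.\ $\mf h^{(N+1)}=\mf h^{(N)}$, whence $\mf h^{(N)}$ is an ideal contained in $\mf h$ and therefore zero, giving $\dim\mf g\le\sum_k\on{codim}$ of the finitely many steps. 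Your sketch names these Banach ingredients but never uses them, so the step that constitutes the entire content of the theorem remains unproved; to complete it you would essentially have to reproduce Omori's analysis rather than the BCH-plus-compactness argument as stated.
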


The action of a Lie group $G$ on a manifold $M$ is called effective, if
\begin{center}
$g.x=h.x$ for all $x \in M$ implies $g=h$.
\end{center}
This means that we can distinguish group elements based on how they act on the manifold. The action of the diffeomorphism group $\on{Diff}(M)$ on the base manifold $M$, given by $\ph.x = \ph(x)$ is by definition effective. The theorem thus implies that the diffeomorphism group of a compact manifold cannot be made into a Banach-Lie group. For noncompact manifolds the argument is a bit more complicated, but also follows from results in \cite{Omori1978}.

Since we cannot have both smooth group operations and a Hilbert space as a Lie algebra, we will now describe a framework that gives up the structure of a Lie group to gain completeness. For more detailed exposition and full proofs, we refer to \cite{Younes2010}.

Since none of the arguments in this section are specific to three dimensions, we will consider the case of $d$-dimensional images. Also images are not necessarily defined on the whole of $\R^d$. So let $\Om \subseteq \R^d$ be an open subset of $\R^d$, where the image $I : \Om \to \R$ is defined. We consider a certain class of spaces of vector fields, called \emph{admissible vector spaces}, to serve as the equivalent of a Lie algebra. The following introduction is taken from \cite{Bruveris2012}. 

\begin{definition}
A Hilbert space $\mathcal{H}$, consisting of vector fields on the domain $\Omega$, is called \emph{admissible}, if it is continuously embedded in $C^1_0(\Omega, \R^d)$, i.e. there exists a constant $C > 0$ such that
\[
\lvert u \rvert_{1,\infty} \leq C \lvert u \rvert_\H\;.
\]
\end{definition}

Here $C^1_0(\Omega, \R^d)$ is the space of all $C^1$-vector fields on $\Omega$ that vanish on the boundary $\partial \Omega$ and at infinity with the norm
\[
\lvert u \rvert_{1,\infty} := \sup_{x \in \Omega}~{\lvert u(x) \rvert + \sum_{i=1}^d~{\lvert \nabla u^i(x) \rvert}}\;.
\]
An admissible vector space $\mathcal{H}$ falls into the class of reproducing kernel Hilbert spaces.
\begin{definition}
A Hilbert space $\mathcal{H}$, consisting of functions $u: \Omega \to \R^d$ is called a \emph{reproducing kernel Hilbert space (RKHS)}, if for all $x \in \Omega$ and $a \in \R^d$ the directional point-evaluation $\operatorname{ev}_x^a: \mathcal{H} \to \R$ defined as $\operatorname{ev}_x^a(u) := a \cdot u(x)$ is a continuous linear functional.

In this case the relation
\[
\langle u, K(.,x)a \rangle = a \cdot u(x), \qquad u \in \mathcal{H}, \, a \in \R^d\;,
\]
defines a function $K:\Omega \times \Omega \to \R^{d\times d}$, called the \emph{kernel of $\mathcal{H}$}.
\end{definition}

If we denote by $L:\mc H \to \mc H^\ast$ the canonical isomorphism between a Hilbert space and its dual, then we have the relation
\[
K(y,x)a = L^{-1}(\on{ev}_x^a)(y)\;.
\]
In order for the RHKS to be admissible, the kernel $K$ has to satisfy the following properties:
\begin{itemize}
\item
$K$ is twice continuously differentiable with bounded derivatives, i.e. $K \in C^2(\Om\x\Om, \R^{d\x d})$ and $|K|_{2,\infty} < \infty$.
\item
$K$ vanishes on the boundary of $\Om\x\Om$, i.e. $K(x,y) = 0$ whenever $x \in \p \Om$ or $y \in \p \Om$.
\end{itemize}

Further exposition of the theory of RKHS can be found, e.g. in \cite{Aronszajn1950}, \cite{Saitoh1988}.

\begin{example}
The Sobolev embedding theorem (see e.g. \cite[Chapter 6]{Adams1975}) states that for $\Omega \subseteq \R^d$ there is an embedding
\[
H^{k+m}(\Omega) \hookrightarrow C^k(\Omega), \qquad m > \frac d 2\;,
\]
of the Sobolev space $H^{k+m}(\Omega)$ into the space of $k$-times continuously differentiable functions $C^k(\Omega)$. Therefore for $m>\tfrac d2 +1$, the space $H^{m}(\Omega)$ is an admissible space. The corresponding kernel is the Green's function of the operator $L = \on{Id} + \sum_{j=1}^{m}~{(-1)^j \Delta^j}$.
\end{example}

We fix an admissible vector space $\mathcal{H}$ with kernel $K$ and let $u \in L^2([0,1], \mathcal{H})$ be a time-dependent vector field. In Sect. \ref{sec_geometry} we assumed the vector fields to be smooth in time, but since we want to minimize over the space of time-dependent vector fields, we work here with the space $L^2([0,1], \mc H)$ of vector fields that are only square-integrable in time. This space is a Hilbert space with the inner product given by 
\[
\langle u, v \rangle_{L^2_{\mc H}} = \int_0^1 \left\langle u_t, v_t \right\rangle_{\mc H}^2 \ud t\;.
\]
We want to define the flow $\ph_t$ of the vector field $u$, as before via the differential equation
\begin{equation}
\label{flow_eq}
\partial_t \ph_t = u_t \circ \ph_t, \qquad \ph_0(x)=x\;.
\end{equation}
If $u_t$ were smooth or at least continuous in time, we could apply standard existence theorems for ODEs. Note that the theorem of Picard-Lindel\"of requires vector fields that are continuous in time and Lipschitz continuous in space. In our case $u_t$ is continuously differentiable in space, but only square-integrable in time. We have the following result concerning the existence and uniqueness of a flow for such a vector field.
\begin{theorem}
Let $\mc H$ be an admissible space and $u \in L^2([0,1], \mc H)$ a time-dependent vector field. Then \eqref{flow_eq} has a unique solution $\ph \in C^1([0,1] \times \Omega, \Omega)$, such that for each $t \in [0,1]$, the map $\ph_t:\Om \to \Om$ is a $C^1$-diffeomorphism of $\Om$.
\end{theorem}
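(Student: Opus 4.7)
The plan is to treat \eqref{flow_eq} as a Carath\'eodory ODE (measurable in $t$, Lipschitz in $x$) rather than a classical one, since $u$ is only square-integrable in time. The admissibility bound $|u_t|_{1,\infty}\le C|u_t|_\H$ combined with Cauchy--Schwarz yields
\[
\int_0^1 |u_t|_{1,\infty}\ud t \;\le\; C\int_0^1 |u_t|_\H \ud t \;\le\; C\,\|u\|_{L^2_\H}<\infty.
\]
So $u_t(x)$ is $L^1$ in $t$ uniformly in $x$, and for a.e.\ $t$ the map $x\mapsto u_t(x)$ is Lipschitz with constant $C|u_t|_\H$, integrable in $t$. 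This is exactly the setting where the Carath\'eodory version of Picard--Lindel\"of applies.

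First I would reformulate the problem as the integral equation
\[
\ph_t(x) = x + \int_0^t u_s(\ph_s(x))\,\ud s\;,
\]
and apply the contraction mapping principle on the Banach space $C([0,1],C_b^0(\Om,\R^d))$ (a closed subset thereof taking values in $\Om$), equipped with a weighted sup-norm $\|\ph\|_w:=\sup_t e^{-w(t)}\|\ph_t-\mathrm{Id}\|_\infty$ where $w(t):=2C\int_0^t |u_s|_\H\,\ud s$. The Lipschitz-in-$x$ property with $L^1$ Lipschitz constant makes the map $\ph\mapsto (t\mapsto \mathrm{Id}+\int_0^t u_s\circ\ph_s\,\ud s)$ a contraction in this norm, yielding a unique continuous solution. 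A Gr\"onwall estimate controls the spatial Lipschitz constant of $\ph_t$ itself:
\[
|\ph_t(x)-\ph_t(y)|\;\le\;|x-y|\,\exp\!\left(C\int_0^t|u_s|_\H\,\ud s\right).
\]

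Next, I would prove $C^1$ regularity in $x$. Formally differentiating in $x$, the Jacobian $J_t(x):=D_x\ph_t(x)\in\R^{d\x d}$ ought to satisfy the linearized equation
\[
\p_t J_t(x) = Du_t(\ph_t(x))\,J_t(x),\qquad J_0(x)=I\;,
\]
which, thanks to the uniform bound on $Du$, is again a Carath\'eodory linear ODE in $\R^{d\x d}$ with $L^1$-in-time coefficients. Existence, uniqueness, and continuous dependence on $x$ give a candidate $J_t$, and a standard difference-quotient argument together with the spatial Lipschitz bound above verifies that $J_t=D_x\ph_t$. Liouville's formula gives $\det J_t(x)=\exp\bigl(\int_0^t\on{tr}\,Du_s(\ph_s(x))\,\ud s\bigr)>0$, so $J_t(x)$ is invertible everywhere. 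Joint continuity of $(t,x)\mapsto \ph_t(x)$ and $(t,x)\mapsto J_t(x)$ follows from the same fixed-point estimates applied to the coupled system $(\ph,J)$, which yields the stated $C^1$ regularity (the $t$-derivative being understood a.e., equivalently $\ph_t(x)$ is absolutely continuous in $t$ with derivative $u_t\circ\ph_t$).

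Finally, to see that each $\ph_t$ is a diffeomorphism, I would construct an inverse as the flow of the time-reversed, sign-reversed vector field: for fixed $t\in[0,1]$, let $\ps_s$ solve $\p_s\ps_s=-u_{t-s}\circ\ps_s$, $\ps_0=\on{Id}$, which exists by the same argument applied to $\tilde u_s:=-u_{t-s}$. Uniqueness of solutions to the Carath\'eodory ODE forces $\ps_t\circ\ph_t=\on{Id}=\ph_t\circ\ps_t$, so $\ph_t^{-1}=\ps_t$, which is itself $C^1$ by the same reasoning. The main obstacle I anticipate is not the diffeomorphism step but establishing the right functional framework for the fixed-point argument: one must choose a function space in which both the contraction estimate (requiring $L^1$-in-time Lipschitz bounds via the weighted norm) and the simultaneous $C^1$-in-$x$ regularity of the limit can be read off. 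Everything else --- Gr\"onwall, Liouville, uniqueness-based inversion --- is then essentially routine.
\qed
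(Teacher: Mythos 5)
Your sketch is correct and is essentially the proof the paper points to: the paper itself gives no argument here, deferring to \cite[Appendix C.2 and Thm.~8.7]{Younes2010}, and that reference proceeds exactly along your lines --- Carath\'eodory-type fixed-point existence for the integral equation using the $L^1$-in-time Lipschitz bound coming from admissibility and Cauchy--Schwarz, Gr\"onwall control of the spatial Lipschitz constants, the linearized (variational) equation for $D_x\ph_t$, and inversion by the time-reversed flow. Two small points worth making explicit in your write-up: (i) why $\ph_t$ maps $\Om$ into $\Om$ --- since elements of $\mc H$ vanish on $\p\Om$ and at infinity, boundary points are equilibria, and uniqueness (applied to the backward flow) prevents trajectories starting in $\Om$ from reaching $\p\Om$; and (ii) as you yourself note, with $u$ only in $L^2([0,1],\mc H)$ the flow is absolutely continuous in $t$ with $\p_t\ph_t=u_t\circ\ph_t$ only a.e., so the ``$C^1$ in $(t,x)$'' of the statement must be read in that weakened sense, which is also how the cited result is formulated.
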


\begin{proof} See \cite[Appendix C.2]{Younes2010} for the existence of a solution and \cite[Thm. 8.7]{Younes2010} for properties of $\ph_t$. \qed
\end{proof}

For matching purposes we will work with all diffeomorphisms that can be obtained as flows of such vector fields. Define the group $G_{\mc H}$ to be
\begin{equation}
\label{def_diff}
G_{\mc H} := \left\{ \ph_1 : \ph_t \textrm{ is a solution of \eqref{flow_eq} for some } u \in L^2([0,1], \mathcal{H}) \right\}\;.
\end{equation}
It can be equipped with the following distance, which is modelled after the geodesic distance on Riemannian manifolds,
\begin{equation}
\label{def_diff_dist}
d_{\mc H}(\ps_0, \ps_1)^2 = \inf_{u \in L^2([0,1], \mc H)} \left\{ \int_0^1 \left|u_t\right|_{\mc H}^2 \ud t \,:\, \ps_1 = \ps_0 \circ \ph_1^u \right\}\;.
\end{equation}
The set $G_{\mc H}$ has the following properties

\begin{theorem}
Let $\mc H$ be an admissible space and $G_{\mc H}$ defined via \eqref{def_diff}. Then
\begin{itemize}
\item
$G_{\mc H}$ is a group.
\item
(Trouv\'e) The function $d_{\mc H}$ is a distance on $G_{\mc H}$ and $(G_{\mc H}, d_{\mc H})$ is a complete metric space.
\item
For each $\ps_0, \ps_1 \in G_{\mc H}$ there exists $u \in L^2([0,1], \mc H)$ realizing the infimum in \eqref{def_diff_dist}, i.e. $d_{\mc H}(\ps_0, \ps_1) = |u|_{L^2_{\mc H}}$.
\end{itemize}
\end{theorem}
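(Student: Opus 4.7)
The plan is to establish the three assertions in sequence, using admissibility to control the regularity of the flows and the Hilbert structure of $L^2([0,1],\mc H)$ for the compactness arguments.

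\textbf{Group structure and the metric axioms.}
The identity $\on{Id}$ is the flow of the zero vector field. For $\ph=\ph_1^u\in G_{\mc H}$, the time-reversed field $v_t:=-u_{1-t}$ lies in $L^2([0,1],\mc H)$ with $|v|_{L^2_{\mc H}}=|u|_{L^2_{\mc H}}$, and its flow at $t=1$ equals $\ph^{-1}$. For composition of $\ph=\ph_1^u$ and $\ps=\ph_1^v$ I concatenate at double speed: $w_t=2v_{2t}$ on $[0,\tfrac12]$ and $w_t=2u_{2t-1}$ on $[\tfrac12,1]$; a time change of variables gives $w\in L^2([0,1],\mc H)$ and $\ph_1^w=\ph\circ\ps$. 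These constructions, together with rescaling in time by an arbitrary factor $\alpha$ on a subinterval of length $1/\alpha$, deliver symmetry and the triangle inequality for $d_{\mc H}$. Separation is the admissibility estimate: if $\ph_1^{u^n}=\ph$ with $|u^n|_{L^2_{\mc H}}\to 0$, then
\[
\sup_{x\in\Om}|\ph(x)-x|\le\int_0^1|u^n_s|_\infty\,\ud s\le C|u^n|_{L^2_{\mc H}}\to 0,
\]
forcing $\ph=\on{Id}$.

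\textbf{Completeness.}
Given a Cauchy sequence $(\ps_n)$ extract $\ps_{n_k}$ with $d_{\mc H}(\ps_{n_k},\ps_{n_{k+1}})<2^{-k}$ and pick $u^k\in L^2([0,1],\mc H)$ with $\ps_{n_{k+1}}=\ps_{n_k}\circ\ph_1^{u^k}$ and $|u^k|_{L^2_{\mc H}}^2\le 4^{-k}+\varepsilon_k$. Splice the $u^k$ into a single field $v$ on $[0,1]$, placing $u^k$ (rescaled in time by $\alpha_k=2^{k+1}$) on a subinterval of length $2^{-k-1}$; the rescaling yields $|v|_{L^2([0,1],\mc H)}^2\le\sum_k 2^{k+1}(4^{-k}+\varepsilon_k)<\infty$ provided the $\varepsilon_k$ are chosen small enough, so $v\in L^2([0,1],\mc H)$ and the flow $\ph^v_t\in G_{\mc H}$ exists on all of $[0,1]$. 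By construction $\ph^v_t$ threads through the shifted subsequence $(\ps_{n_1}^{-1}\ps_{n_k})_k$ at a sequence of dyadic instants tending to $1$, and the admissibility estimate above shows that $t\mapsto\ph^v_t$ is continuous in $d_{\mc H}$; hence $\ps_{n_k}\to\ps_{n_1}\circ\ph^v_1$. The Cauchy property promotes this subsequential limit to convergence of the full sequence.

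\textbf{Existence of a minimizer.}
Take a minimizing sequence $u^n$ with $\ph_1^{u^n}=\ps_0^{-1}\ps_1$ and $|u^n|_{L^2_{\mc H}}^2\to d_{\mc H}(\ps_0,\ps_1)^2$. Bounded subsets of the Hilbert space $L^2([0,1],\mc H)$ are weakly sequentially precompact, so along a subsequence $u^n\rightharpoonup u$ with $|u|_{L^2_{\mc H}}\le\liminf|u^n|_{L^2_{\mc H}}$ by weak lower semicontinuity of the norm. Admissibility and a Gronwall estimate produce a uniform Lipschitz bound in $x$ and a $\tfrac12$-H\"older bound in $t$ for the flows $\ph^{u^n}$, so Arzel\`a--Ascoli extracts a further subsequence converging uniformly on compacts of $[0,1]\x\Om$ to some limit $\phi$. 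Identifying $\phi$ with $\ph^u$ yields $\ph_1^u=\ps_0^{-1}\ps_1$, and together with the weak lower semicontinuity bound shows that $u$ attains the infimum.

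\textbf{The main obstacle} is precisely this identification: one must pass to the limit in the nonlinear integral equation $\ph^{u^n}_t(x)=x+\int_0^t u^n_s(\ph^{u^n}_s(x))\,\ud s$ given only weak convergence of $u^n$. The reproducing kernel property of $\mc H$ is decisive here. Writing $a\cdot u^n_s(y)=\langle u^n_s,K(\cdot,y)a\rangle_{\mc H}$, I split
\begin{align*}
\int_0^t\langle u^n_s,K(\cdot,\ph^{u^n}_s(x))a\rangle_{\mc H}\,\ud s
&=\int_0^t\langle u^n_s,K(\cdot,\ph^{u^n}_s(x))a-K(\cdot,\phi_s(x))a\rangle_{\mc H}\,\ud s\\
&\quad+\int_0^t\langle u^n_s-u_s,K(\cdot,\phi_s(x))a\rangle_{\mc H}\,\ud s.
\end{align*}
The first term is controlled by Cauchy--Schwarz using boundedness of $|u^n|_{L^2_{\mc H}}$, the Lipschitz dependence $y\mapsto K(\cdot,y)\in\mc H$ (from $|K|_{2,\infty}<\infty$), and the uniform convergence $\ph^{u^n}_s\to\phi_s$. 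The second term is a duality pairing $\langle u^n-u,h\rangle_{L^2([0,1],\mc H)}$ against the fixed element $h_s:=K(\cdot,\phi_s(x))a$ and vanishes by weak convergence. Passing to the limit shows $\phi$ satisfies the same integral equation as $\ph^u$, and uniqueness of the flow forces $\phi=\ph^u$.
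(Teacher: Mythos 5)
The paper itself does not prove this theorem but cites Younes (Thms.~8.14, 8.15, 8.20), so your sketch has to stand on its own; your first and third parts do, and follow essentially the standard Trouv\'e--Younes route. In particular, your composition order for concatenated flows is the right one (the field acting on the later time interval composes on the left, so $\ph^w_1=\ph^u_1\circ\ph^v_1$), and the minimizer argument --- weak compactness in $L^2([0,1],\mc H)$, Gronwall plus Arzel\`a--Ascoli for the flows, and the RKHS splitting of $\int_0^t\langle u^n_s,K(\cdot,\ph^{u^n}_s(x))a\rangle_{\mc H}\,\ud s$ into a strongly controlled term and a weak-convergence pairing against the fixed element $s\mapsto K(\cdot,\phi_s(x))a$ --- is sound and is exactly the mechanism behind the cited results.

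The gap is in the completeness step, and it is an order-of-composition error. With the paper's definition $d_{\mc H}(\ps_0,\ps_1)^2=\inf\{|u|^2_{L^2_{\mc H}}:\ps_1=\ps_0\circ\ph^u_1\}$, your increments accumulate on the right: $\ps_{n_1}^{-1}\ps_{n_{K+1}}=\ph^{u^1}_1\circ\cdots\circ\ph^{u^K}_1$. But a field spliced in increasing time order has partial flows in which later blocks act on the left: $\ph^v_{t_K}=\ph^{u^K}_1\circ\cdots\circ\ph^{u^1}_1$. Since diffeomorphisms do not commute, your flow does not ``thread through'' $(\ps_{n_1}^{-1}\ps_{n_k})_k$, so the claimed limit $\ps_{n_k}\to\ps_{n_1}\circ\ph^v_1$ is unjustified; moreover, for this (left-invariant) convention the map $t\mapsto\ph^v_t$ is not obviously $d_{\mc H}$-continuous, because $d_{\mc H}(\ph^v_s,\ph^v_t)$ is the cost of $(\ph^v_s)^{-1}\circ\ph^v_t$, a conjugate of the short-time flow, not of $\ph^v_t\circ(\ph^v_s)^{-1}$. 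The repair is short once the mismatch is seen: place the block carrying $u^k$ on $[2^{-k},2^{-k+1}]$, so the blocks accumulate at $t=0$; then the flow of $v$ from time $2^{-K}$ to time $1$ is exactly $\ph^{u^1}_1\circ\cdots\circ\ph^{u^K}_1=\ps_{n_1}^{-1}\ps_{n_{K+1}}$, hence $\ps_{n_{K+1}}=\ps_{n_1}\circ\ph^v_1\circ(\ph^v_{2^{-K}})^{-1}$ and, by left invariance, $d_{\mc H}(\ps_{n_{K+1}},\ps_{n_1}\circ\ph^v_1)\le\int_0^{2^{-K}}|v_r|_{\mc H}\,\ud r\to0$; the Cauchy property then upgrades this to convergence of the whole sequence. (Equivalently: time-reverse each $u^k$, run your splicing for the inverses $\ps_{n_k}^{-1}$, and use that inversion is an isometry between this metric and its right-invariant twin.)
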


\begin{proof}
See \cite[Thm. 8.14]{Younes2010} for a proof that $G_{\mc H}$ is closed under group operations, see \cite[Thm. 8.15]{Younes2010} for the completeness of $d_{\mc H}$ and see \cite[Thm. 8.20]{Younes2010} for the existence of a minimum. \qed
\end{proof}

Note that we have not said anything about the structure of $G_{\mc H}$ as a manifold or a Lie group. In an informal way the space $\mc H$ acts as a ``Lie algebra'' of the ``Lie group'' $\mc G_{\mc H}$, but all the statements of Sect. \ref{sec_geometry} are to be interpreted only formally in this framework.

The main advantage of working with admissible spaces and the group $G_{\mc H}$ is the following theorem.

\begin{theorem}
\label{thm_ex_min}
Let $\mc H$ be an admissible space and $I_0, I_1 \in L^2(\Om)$. Then there exists a minimizer for the registration energy
\begin{equation}
\label{reg_en}
E(u) = \frac 12 \int_0^1 |u_t|^2_{\mc H} \ud t + \frac 1{2\si^2} \| I_0 \circ \ph^{-1} - I_1 \|^2_{L^2}\;,
\end{equation}
i.e. there exists $\wt u \in L^2([0,1], \mc H)$ such that
$E(\wt u) = \inf_{u \in L^2([0,1], \mc H)} E(u)$.
\end{theorem}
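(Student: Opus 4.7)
The plan is the standard direct method of the calculus of variations, in which the only delicate point is passing to the limit inside the data attachment term. Fix a minimizing sequence $u^n\in L^2([0,1],\mc H)$ with $E(u^n)\to \inf E$. Since $E(u^n)\ge \tfrac12\|u^n\|_{L^2_{\mc H}}^2$, the sequence is bounded in the Hilbert space $L^2([0,1],\mc H)$, so after extracting a subsequence we may assume $u^n\rightharpoonup \wt u$ weakly there. Because $\|\cdot\|_{L^2_{\mc H}}^2$ is a squared Hilbert norm, it is weakly lower semicontinuous, which already handles the kinetic part: $\tfrac12\|\wt u\|_{L^2_{\mc H}}^2\le\liminf_n \tfrac12\|u^n\|_{L^2_{\mc H}}^2$.

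Next I would handle the flows. By admissibility, $|u^n_t|_{1,\infty}\le C|u^n_t|_{\mc H}$, so $u^n$ is bounded in $L^2([0,1],C^1_0)$, hence in $L^1([0,1],C^1_0)$ uniformly in $n$. This gives uniform bounds and uniform equicontinuity of the flows $\ph^n_t$ on $[0,1]\times K$ for any compact $K\subset\Om$, as well as uniform bounds on $D\ph^n_t$. Applying Arzel\`a--Ascoli (to both $\ph^n$ and $(\ph^n)^{-1}$, using the analogous bounds for the backward flow) I extract a further subsequence so that $\ph^n_t\to \wt\ph_t$ and $(\ph^n_t)^{-1}\to\wt\ph_t^{-1}$ locally uniformly on $[0,1]\times\Om$. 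To identify $\wt\ph$ as the flow of $\wt u$, I pass to the limit in the integral form
\[
\ph^n_t(x)=x+\int_0^t u^n_s(\ph^n_s(x))\ud s,
\]
writing $u^n_s(\ph^n_s(x))=u^n_s(\wt\ph_s(x))+\bigl(u^n_s(\ph^n_s(x))-u^n_s(\wt\ph_s(x))\bigr)$. The second bracket tends to zero uniformly by the $C^1$ bound on $u^n$ combined with the uniform convergence $\ph^n\to\wt\ph$; the first term converges because, for each $x$, the linear functional $u\mapsto \int_0^t u_s(\wt\ph_s(x))\ud s$ on $L^2([0,1],\mc H)$ is continuous (it is controlled by the reproducing kernel), so weak convergence $u^n\rightharpoonup\wt u$ forces convergence to $\int_0^t \wt u_s(\wt\ph_s(x))\ud s$. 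Thus $\wt\ph_t$ is the flow of $\wt u$, and $\wt u\in L^2([0,1],\mc H)$ together with the theorem on existence of flows shows $\wt\ph_1\in G_{\mc H}$.

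It remains to show lower semicontinuity of the matching term, i.e.\ $I_0\circ(\ph^n_1)^{-1}\to I_0\circ\wt\ph_1^{-1}$ in $L^2(\Om)$. For $I_0\in C_c(\Om)$ this is immediate from the locally uniform convergence $(\ph^n_1)^{-1}\to\wt\ph_1^{-1}$ together with the uniform bound on the Jacobians of $(\ph^n_1)^{-1}$ (used to carry out a change of variables controlling the $L^2$-norms). For general $I_0\in L^2(\Om)$ I approximate by $I_0^k\in C_c(\Om)$ with $I_0^k\to I_0$ in $L^2$; using the uniform Jacobian bound again, $\|I_0\circ(\ph^n_1)^{-1}-I_0^k\circ(\ph^n_1)^{-1}\|_{L^2}\le C\|I_0-I_0^k\|_{L^2}$ uniformly in $n$, and a standard $3\varepsilon$ argument combining this with the case of continuous data gives $I_0\circ(\ph^n_1)^{-1}\to I_0\circ\wt\ph_1^{-1}$ in $L^2$. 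Therefore the matching term is continuous along the subsequence, and combined with the weak lower semicontinuity of the kinetic part we obtain $E(\wt u)\le \liminf_n E(u^n)=\inf E$, so $\wt u$ is the desired minimizer.

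I expect the main obstacle to be the passage to the limit in the data attachment term: one needs the uniform Jacobian bound on $(\ph^n_1)^{-1}$ (which follows from admissibility and Gr\"onwall applied to $D\ph^n$) in order to reduce to continuous compactly supported images by density. Identifying the limit flow as the flow of the weak limit $\wt u$ is the second delicate step, and is the place where both the RKHS/admissibility structure (to obtain $C^1$ bounds on $u^n$) and weak convergence in $L^2([0,1],\mc H)$ are used simultaneously.
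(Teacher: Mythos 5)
Your proposal is correct and follows essentially the same route as the paper: the direct method with a minimizing sequence, weak compactness in $L^2([0,1],\mc H)$, weak lower semicontinuity of the kinetic term, and continuity of the matching term under weak convergence of the vector fields via locally uniform convergence of the flows and their inverses together with uniform Jacobian bounds. The only difference is that you sketch inline (Arzel\`a--Ascoli plus identification of the limit flow through the integral equation and the RKHS point-evaluation bound, then a density argument for $I_0\in L^2$) the two steps the paper delegates to cited results of Younes and Bruveris, which is consistent with their content.
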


\begin{proof}[Sketch]
Let us introduce the notation $U(\ph) = \frac 1{2\si^2} \| I_0 \circ \ph^{-1} - I_1 \|^2_{L^2}$. This allows us to write
$ E(u) = \frac 12 |u|_{L^2_{\mc H}}^2 + U(\ph_1)$. Consider a minimizing sequence $u^n \in L^2([0,1], \mc H)$, such that $E(u^n) \to \inf_u E(u)$. As the functional $U(.)$ is bounded from below, the sequence $(u^n)_{n \in \mathbb N}$ is bounded in the Hilbert space $L^2([0,1], \mc H)$. Since bounded sets in Hilbert spaces are weakly compact, we can extract a subsequence, again denoted by $(u^n)_{n \in \mathbb N}$, that converges weakly to some $\wt u$. What remains to show now is that this $\wt u$ is indeed the minimizer. The inequality $\inf_u E(u) \leq E(\wt u)$ is trivial and it remains to show the converse.

From
\[
\langle u^n, \wt u \rangle \leq \lvert u^n\rvert_{L^2} \lvert \wt u \rvert_{L^2}
\]
we see by passing to the $\liminf$ that $\lvert \wt u \rvert_{L^2} \leq \liminf_{n\to \infty} \lvert u^n\rvert_{L^2}$. Concerning $U(\ph^n_1)$ we will use the following property:
\begin{center}
  If $u^n \to \wt u$ weakly in $L^2([0,1], \mc H)$, then $U(\ph^n_1) \to U(\wt \ph_1)$.
\end{center}
This implication can be split up into two steps.
\begin{enumerate}
\item
Let $u^n \to \wt u$ weakly. Then the sequence $(\ph^n_1)_{n \in \mathbb N}$ of flows satisfies
\begin{itemize}
\item $\ph^n_1 \to \wt \ph_1$ and $(\ph^n_1)^{-1} \to \wt \ph_1^{-1}$ uniformly on compact sets and
\item the sequence $(|D\ph^n|_\infty)_{n \in \mathbb N}$ is bounded.
\end{itemize}
\item
Under the above conditions on the sequence $(\wt \ph_1^n)_{n \in \mathbb N}$ of flows, we have convergence $U(\ph_1^n) \to U(\wt \ph_1)$.
\end{enumerate}
The proof for the first step is a combination of \cite[Thm. 8.11]{Younes2010} and Gronwall's lemma \cite[Thm. C.8]{Younes2010}. An explicit proof for the second step can be found in \cite[Thm. 2.7]{Bruveris2012}. Putting all pieces together we get
\begin{align*}
E(\wt u) &= \tfrac 12 \lvert \wt u\rvert^2_{L^2_{\mc H}} + U(\wt \ph_1) \\
& \leq \liminf_{n \to \infty} \tfrac 12\lvert u^n \rvert^2_{L^2_{\mc H}}  + \lim_{n \to \infty} U(\ph^n_1)
= \lim_{n \to \infty} E(u^n)\\
& \leq \inf_{u \in L^2} E(u)\enspace,
\end{align*}
Hence $\wt u$ is a minimizer. \qed
\end{proof}

To make the connection back to the general framework, we will show that, if the images $I_0, I_1$ are sufficiently smooth, then the minimizer will also be smooth, both in space and in time. Thus the smooth geometric framework on Sect. \ref{sec_geometry} retains some use. It may not be sufficient to show existence of a minimizer or its properties, but if existence has been established, the minimizer does reside in the smooth framework.

\begin{theorem}
Let $\mc H$ be an admissible space. Let $I_0 \in C_0^1(\Om)$ and $I_1 \in C_0(\Om)$. Then the minimizer $u$ of the registration energy \eqref{reg_en} satisfies
\[
Lu_t = \frac {1}{\si^2} \left| \det D\ph_{t,1}^{-1}(x) \right| 
\left(I_0\circ\ph_t^{-1} - I_1\circ\ph_{t,1}^{-1}\right) \nabla \left(I_0\circ\ph_t^{-1}\right)\;,
\]
and the equation
\begin{equation}
\label{younes_cons_mom}
L u_t = \on{Ad}_{\ph_t^{-1}}^\ast Lu_0\;.
\end{equation}
\end{theorem}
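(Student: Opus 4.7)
The plan is to treat both formulas as consequences of the vanishing of the first variation at the minimizer, essentially re-running the derivation of Thm.~\ref{lddmm_DE} inside the admissible-space framework, and then recognising the second displayed formula as the conservation law $\on{Ad}_{\ph_t}^\ast u_t^\flat = u_0^\flat$ that was already established abstractly at equation~\eqref{ep_int_diamond}.

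First I would verify that the minimizer $u$ is a critical point of $E$ on $L^2([0,1],\mc H)$. The kinetic term is a squared Hilbert-space norm and is manifestly smooth. The potential term $U(\ph_1^u)=\tfrac{1}{2\si^2}\|I_0\circ(\ph_1^u)^{-1}-I_1\|_{L^2}^2$ requires, first, that $u\mapsto \ph_1^u$ be differentiable from $L^2([0,1],\mc H)$ into a suitable space of $C^1$-diffeomorphisms of $\Om$ and, second, that $U$ itself be differentiable there once $I_0\in C_0^1$ and $I_1\in C_0$. Both ingredients are available from Younes's estimates on flows (Thm.~8.7 and surrounding lemmas), and they yield an infinitesimal-variation formula of exactly the shape of Lem.~\ref{delta_u_right}, now interpreted as a Bochner-integral identity in a Banach space of $C^1$-vector fields.

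Given this, the chain of equalities in the proof of Thm.~\ref{lddmm_DE} carries over verbatim with $\mf g$ replaced by $\mc H$, the $\flat$-map corresponding to $L$, and the action being pullback of functions. Setting $DE(u)(t)=0$ and specialising the momentum map $I\diamond\pi=-\pi\nabla I$ together with the cotangent lift $\ph.\pi=|\det D\ph^{-1}|\,\pi\circ\ph^{-1}$ reproduces the first displayed formula exactly, since those manipulations are algebraic and local and are insensitive to whether $u$ is smooth in time or only square-integrable. For the second identity \eqref{younes_cons_mom}, I would read the critical-point condition in the intermediate form $u_t^\flat=-\ph_t.I_0\diamond\ph_t\ph_1^{-1}.\pi$ of \eqref{DE_crit} and apply the abstract step between \eqref{DE_crit} and \eqref{ep_int_diamond} to obtain $\on{Ad}_{\ph_t}^\ast u_t^\flat=I_0\diamond\ph_1^{-1}.\pi$, whose right-hand side is independent of $t$. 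Evaluating at $t=0$ (where $\ph_0=\on{Id}$) identifies this constant as $Lu_0$; inverting $\on{Ad}_{\ph_t}^\ast$ via $(\on{Ad}_{\ph_t}^\ast)^{-1}=\on{Ad}_{\ph_t^{-1}}^\ast$ then yields $Lu_t=\on{Ad}_{\ph_t^{-1}}^\ast Lu_0$.

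The main obstacle, and really the only place where the non-smooth setting demands extra work, is the differentiability of the flow map $u\mapsto\ph_1^u$: since $u$ is only $L^2$ in time, the variational formula $\de\ph_t=\ph_t\int_0^t \on{Ad}_{\ph_s^{-1}}\de u_s\ud s$ must be interpreted as a Bochner integral in a space of $C^1$-vector fields, with the integrand controlled uniformly in $t\in[0,1]$; this is the technical content of Younes's appendix on flows, built on Gronwall-type estimates. Once this differentiability is in hand, the rest is the algebra of Sect.~\ref{sec_geometry}, and as an \emph{a posteriori} bonus the first displayed formula shows that $Lu_t$ inherits the same $t$-regularity as $\ph_t$, which bootstraps $u_t$ to be continuous in time provided $I_0$ and $I_1$ are sufficiently regular --- reconciling the rough $L^2$ existence theory of this section with the smooth geometric picture of Sect.~\ref{sec_geometry}.
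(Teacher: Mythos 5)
Your sketch is sound, but note that the paper itself offers no argument here: its ``proof'' is only a pointer to Thms.~11.5 and 11.6 of Younes's book. What you propose is essentially the argument that underlies those cited results, namely a rerun of the variational computation of Thm.~\ref{lddmm_DE} in the admissible-space setting: differentiate the flow map $u\mapsto\ph^u$ on $L^2([0,1],\mc H)$ using the Gronwall-type flow estimates, obtain the analogue of Lem.~\ref{delta_u_right} as a Bochner-integral identity, set the first variation to zero, and specialise the momentum map $I\diamond\pi=-\pi\nabla I$ and the density-type cotangent action to get the first displayed formula. You correctly identify the only genuinely new technical burden of the non-smooth setting (differentiability of $u\mapsto\ph_1^u$ and of $U$ when $u$ is merely $L^2$ in time, $I_0\in C^1_0$, $I_1\in C_0$), and you correctly delegate it to the flow estimates rather than claiming it for free. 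Your route to \eqref{younes_cons_mom} is also the right one: deriving $\on{Ad}^\ast_{\ph_t}u_t^\flat=I_0\diamond\ph_1^{-1}.\pi$ purely algebraically from the critical-point identity \eqref{DE_crit}, identifying the constant at $t=0$, and inverting $\on{Ad}^\ast_{\ph_t}$, avoids any differentiation in $t$, which would not be justified a priori since $u$ is only square-integrable in time; the time-regularity is then recovered \emph{a posteriori}, exactly as the paper remarks after the theorem.

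Two small points of care if you were to write this out in full. First, the Euler--Lagrange identity obtained from $DE(u)=0$ in $L^2([0,1],\mc H^\ast)$ holds only for almost every $t$; one should say explicitly that \eqref{younes_cons_mom} then provides the continuous-in-$t$ representative for which the pointwise statement of the theorem makes sense. Second, ``the chain of equalities carries over verbatim'' hides the step where $U(\ph)$ is differentiated with respect to $\ph$: this uses $\nabla I_0$ and a change of variables producing the Jacobian factor, and it is precisely where the hypotheses $I_0\in C^1_0(\Om)$, $I_1\in C_0(\Om)$ enter, so it deserves an explicit estimate rather than an appeal to the finite-dimensional computation. Neither point affects the correctness of the plan.
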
 
\begin{proof}
See \cite[Thm. 11.5]{Younes2010} and \cite[Thm. 11.6]{Younes2010}. \qed
\end{proof}

We did encounter equation \eqref{younes_cons_mom} in the smooth setting as well in the form \eqref{ep_int}. Now however we see that the right hand side is differentiable in $t$, because $\ph_t$, being the solution of a differential equation, is differentiable in $t$ and so is $u_t$. Differentiating \eqref{younes_cons_mom} with respect to $t$ leads to the EPDiff equation \eqref{eq_epdiff}. Finally we can state the following theorem, which brings us back to the smooth framework.

\begin{theorem}
Let $\mc H$ be an admissible space with $\Om = \R^3$. Then $H^\infty(\R^3, \R^3) \subset \mc H$ and $\on{Diff}_{H^\infty}(\R^3) \subset G_{\mc H}$. If $I_0, I_1 \in C^\infty_c(\R^3)$, then the minimizer $t \mapsto u_t$ of \eqref{reg_en} from Thm. \ref{thm_ex_min} satisfies
\[
u \in C^\infty([0,1], H^\infty(\R^3, \R^3))\;.
\]
\end{theorem}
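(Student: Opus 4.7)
The plan is to combine the explicit formula
\[
Lu_t = \frac{1}{\sigma^2}\bigl|\det D\ph_{t,1}^{-1}\bigr|\bigl(I_0\circ\ph_t^{-1} - I_1\circ\ph_{t,1}^{-1}\bigr)\nabla\bigl(I_0\circ\ph_t^{-1}\bigr)
\]
from the previous theorem with a bootstrap argument driven by the elliptic regularity of $L$, and then to promote the spatial smoothness to joint space--time smoothness using the EPDiff equation \eqref{eq_epdiff} and the conservation law \eqref{younes_cons_mom}.

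For the two inclusions $H^\infty(\R^3,\R^3)\subset\mc H$ and $\on{Diff}_{H^\infty}(\R^3)\subset G_{\mc H}$, I would first show that every test vector field lies in $\mc H$, by expressing it as a superposition of reproducing kernel sections $K(\cdot,x)a$ and using the $C^2$-regularity and boundedness of the kernel. Since the operator $L:\mc H\to\mc H^*$ associated to the admissible inner product is an elliptic operator of order at least $2m$ with $m>d/2+1$, a standard Sobolev estimate then gives $H^\infty\subset\mc H$ with continuous inclusion. The second inclusion follows because any $\ph\in\on{Diff}_{H^\infty}(\R^3)$ can be realized as the time-$1$ flow of a smooth path of $H^\infty$-vector fields, which by the first inclusion belongs to $L^2([0,1],\mc H)$.

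For the spatial regularity of the minimizer, I would bootstrap as follows. The existence theorem gives $u\in L^2([0,1],\mc H)$ and hence $\ph_t,\ph_{t,1}^{-1}\in C^1$ with a uniform bound in $t$. The explicit formula then shows that $Lu_t$ is a compactly supported $C^0$-function, because $I_0,I_1\in C_c^\infty(\R^3)$ force the product on the right to have compact support; elliptic regularity of $L$ on $\R^3$ now upgrades $u_t$ to a higher Sobolev class. Feeding the improved $u_t$ back into the flow equation $\p_t\ph_t=u_t\circ\ph_t$ and its inverse raises the regularity of $\ph_t$ and $\ph_{t,1}^{-1}$, and a new pass through the formula yields an even smoother $Lu_t$. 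Iterating this chain of gains produces $u_t\in H^\infty(\R^3,\R^3)$ for every $t\in[0,1]$, with bounds uniform in $t$.

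For the time regularity I would use the conservation law $Lu_t=\on{Ad}^\ast_{\ph_t^{-1}}Lu_0$, equivalently the EPDiff equation $\p_t(Lu_t)=-\on{ad}^\ast_{u_t}(Lu_t)$. Since $u_t$ is now spatially smooth and $L$ is invertible on $H^\infty$, the right-hand side of EPDiff is a smooth function of $u_t$ with values in $H^\infty$, which yields $u\in C^1([0,1],H^\infty)$; differentiating EPDiff repeatedly in $t$ and reinserting the bound at each stage gives $u\in C^\infty([0,1],H^\infty(\R^3,\R^3))$.

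The hard part will be making the spatial bootstrap uniform in $t$ and cleanly tracking the regularity through the compositions $I_0\circ\ph_t^{-1}$ and $I_1\circ\ph_{t,1}^{-1}$: each composition threatens to exchange rather than gain derivatives between the map and the function, and one must check that the product structure together with the compact support coming from $I_0,I_1\in C_c^\infty$ genuinely improves regularity at every step. A secondary subtlety is ensuring that the elliptic regularity of $L$ on the non-compact domain $\R^3$ is compatible with the vanishing-at-infinity condition built into the admissibility, which is handled by exploiting that the right-hand side of the formula for $Lu_t$ has support contained in the compact set $\ph_t(\on{supp}I_0)\cup\ph_{t,1}(\on{supp}I_1)$.
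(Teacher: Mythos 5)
The paper itself states this theorem without any proof (the preceding regularity statements are delegated to Younes's book and the surrounding literature), so there is no in-text argument to compare you against; I can only assess your sketch on its own terms. Its overall architecture --- spatial regularity by bootstrapping through the explicit formula for $Lu_t$, then time regularity from the conservation law \eqref{younes_cons_mom}, equivalently EPDiff \eqref{eq_epdiff} --- is indeed the standard route to this result in the sources the paper draws on, and the final time-bootstrap is sound once spatial smoothness uniform in $t$ is in hand (uniformity coming from the conservation of $|u_t|_{\mc H}$ along minimizers).

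The genuine gap is at the foundation: you assert that ``the operator $L$ associated to the admissible inner product is an elliptic operator of order at least $2m$ with $m>d/2+1$'' and you use ``elliptic regularity of $L$'' both for the inclusion $H^\infty(\R^3,\R^3)\subset\mc H$ and as the engine of the spatial bootstrap. Admissibility only gives the continuous embedding $\mc H\hookrightarrow C^1_0$ and a kernel $K\in C^2$ with bounded derivatives; it does not make the Riesz isomorphism $L:\mc H\to\mc H^\ast$ a differential operator, let alone elliptic of finite order. For instance, a Gaussian reproducing kernel yields an admissible space consisting of real-analytic vector fields, which contains no nonzero compactly supported field, so neither $H^\infty\subset\mc H$ nor your kernel-superposition argument can be derived from admissibility alone, and ``elliptic regularity of $L$'' has no meaning there. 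To make your plan work you must add the structural hypothesis that $\mc H$ is of Sobolev type, e.g. the paper's example $L=\on{Id}+\sum_{j}(-1)^j\Delta^j$ as in \eqref{L_norm}; then $H^\infty\subset H^m=\mc H$ is immediate (no elliptic estimate needed), and your bootstrap does close: the right-hand side of the formula is compactly supported and one derivative rougher than $\ph_t$, so each application of $L^{-1}$ gains $2m-1$ derivatives, uniformly in $t$. Finally, the inclusion $\on{Diff}_{H^\infty}(\R^3)\subset G_{\mc H}$ rests on the nontrivial claim that every such diffeomorphism is the time-one flow of a path of $H^\infty$ vector fields (a connectedness/regular-Lie-group fact); you assert it but give no argument, and it is not a consequence of anything else in your sketch.
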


This theorem closes the loop between the geometric and analytic settings for image registration.


\end{document}